\documentclass[11pt,reqno]{amsart}
\usepackage[T1]{fontenc}
\usepackage[utf8]{inputenc}
\usepackage{lmodern}
\usepackage{amsmath,amssymb,mathtools}
\usepackage{microtype}
\usepackage[margin=1in]{geometry}
\usepackage{enumitem}
\usepackage[colorlinks=true,linkcolor=blue,citecolor=blue,urlcolor=blue]{hyperref}
\hypersetup{pdftitle={Tight sigma-filtrations and the ball fixed point property of C(omega*)},pdfauthor={Antonio Aviles and Mauricio Rossetto}}
\newtheorem{theorem}{Theorem}[section]
\newtheorem{lemma}[theorem]{Lemma}
\newtheorem{proposition}[theorem]{Proposition}
\newtheorem{corollary}[theorem]{Corollary}
\newtheorem{question}[theorem]{Question}
\newtheorem{mainthm}{Theorem}

\theoremstyle{definition}
\newtheorem{definition}[theorem]{Definition}
\theoremstyle{remark}
\newtheorem*{remark}{Remark}

\newcommand{\Pow}{\mathcal P}
\newcommand{\Cc}{\operatorname{Cc}}
\newcommand{\fin}{\mathrm{fin}}
\newcommand{\cont}{\mathfrak c}
\newcommand{\BB}{\mathfrak B}
\newcommand{\FF}{\mathcal F}
\newcommand{\gen}[1]{\langle #1\rangle}
\newcommand{\id}{\operatorname{id}}
\newcommand{\MA}{\mathrm{MA}}
\newcommand{\WFN}{\mathrm{WFN}}
\newcommand{\Kcc}{K^{\omega,\omega_1}}
\newcommand{\Xcc}{X^{\omega,\omega_1}}
\newcommand{\res}[2]{#1\mathbin{\upharpoonright}#2}
\newcommand{\below}[2]{#1\mathbin{|}_{#2}}

\newcommand{\axiomTSF}{\textup{(TSF)}}
\setlist[enumerate,1]{label=\textup{(\alph*)},leftmargin=*,itemsep=2pt}
\title[Tight $\sigma$-filtrations and the BFPP of $C(\omega^*)$]{Tight $\sigma$-filtrations and the ball\\ fixed point property of $C(\omega^*)$}
\author{Antonio Avil\'es}
\address{Departamento de Matem\'aticas, Universidad de Murcia, Campus de Espinardo, 30100 Murcia, Spain}
\email{avileslo@um.es}
\thanks{The first author was supported by MICIU/AEI/10.13039/501100011033 and ERDF---A way of making Europe (project PID2021-122126NB-C32), and by Fundaci\'on S\'eneca---ACyT Regi\'on de Murcia (project 21955/PI/22).}
\author{Maur\'icio Rossetto}
\address{Departamento de Matem\'atica, Instituto de Matem\'atica, Estat\'istica e Ci\^encia da Computa\c c\~ao, Universidade de S\~ao Paulo, Rua do Mat\~ao, 1010, 05508-090 S\~ao Paulo, Brazil}
\email{rossetto@ime.usp.br}
\thanks{The second author was supported by the S\~ao Paulo Research Foundation (FAPESP), grant 2025/08065-1.}
\date{October 8, 2026}
\subjclass[2020]{Primary 03E35, 06E05, 47H10; Secondary 03E75, 46E15, 54G05}
\keywords{Ball fixed point property, $C(\omega^*)$, tightly $\sigma$-filtered, weak Freese--Nation property, Cohen model, $P$-ideal, retract}
\begin{document}
\begin{abstract}
Assuming that $\Pow(\omega)$ is tightly $\sigma$-filtered, we prove that $C(\omega^*)$ fails the ball fixed point property. This axiom holds under the continuum hypothesis and in the $\aleph_2$-Cohen model. The proof shows that the Stone space of the countable--cocountable algebra $\Cc(\omega_1)$ is a continuous retract of $\omega^*$ and uses the equivalence between tight $\sigma$-filteredness of $\Cc(\omega_1)$ and of $\Pow(\omega)$. We also prove that the quotient of a tightly $\sigma$-filtered Boolean algebra $B$ with the countable separation property by an $\aleph_1$-generated $P$-ideal is a Boolean retract of $B$.
\end{abstract}
\maketitle

\section{Introduction}
A mapping $T\colon D\to Z$ defined on a subset $D$ of a Banach space $Z$ is \emph{nonexpansive} if $\|T(x)-T(y)\|\leq\|x-y\|$ for all $x,y\in D$. Since the theorems of Browder~\cite{Browder}, G\"ohde~\cite{Gohde} and Kirk~\cite{Kirk}, which provide fixed points for nonexpansive selfmaps of bounded, closed and convex subsets of uniformly convex spaces, it has been known that the existence of fixed points for such mappings is closely tied to the geometry of the underlying space; see~\cite{GoebelKirk}. Here we are interested in the case where the domain is the closed unit ball. All spaces of continuous functions are real Banach spaces.

\begin{definition}
A Banach space $X$ has the \emph{ball fixed point property} (BFPP) if every nonexpansive mapping $T\colon B_X\to B_X$ has a fixed point.
\end{definition}

Among spaces of continuous functions the BFPP may or may not hold. On the unit ball of the space $c$ of convergent sequences, that is, $C(\omega+1)$, the map
\[
(x_1,x_2,\ldots)\longmapsto (1,-1,x_1,x_2,\ldots)
\]
is an isometry without fixed points, whereas $\ell_\infty=C(\beta\omega)$ has the BFPP~\cite{Soardi}. This led Avil\'es et al.~\cite{BFPP} to ask for which compact Hausdorff spaces $K$ the space $C(K)$ has the BFPP. They proved the implications
\[
K\text{ is extremally disconnected}
\ \Longrightarrow\ C(K)\text{ has the BFPP}
\ \Longrightarrow\ K\text{ is an }F\text{-space}.
\]
The converse of the first implication is an open problem, while the converse of the second one is false; see~\cite{BFPP}. The natural place to test where the BFPP lies between these two classes is $\omega^*=\beta\omega\setminus\omega$, a classical example of a compact $F$-space which is not extremally disconnected. The main result of~\cite{BFPP} about it is that, under the Continuum Hypothesis (CH), $C(\omega^*)$ fails the BFPP.

The argument has two independent parts. The first is an example of a space of continuous functions that does not satisfy the BFPP in ZFC. Let
\[
\Xcc:=\bigl\{f\in\ell_\infty(\omega_1):
\res f{\omega_1\setminus A}\text{ is constant for some countable }A\subseteq\omega_1\bigr\}.
\]
This is a closed subalgebra of $\ell_\infty(\omega_1)$ containing the constants, so, by Arens' representation theorem~\cite{Arens}, it is isometrically isomorphic to $C(\Kcc)$ for some compact Hausdorff space $\Kcc$. Its idempotents are the indicator functions of countable and cocountable subsets of $\omega_1$, so $\Kcc$ is the Stone space of the countable--cocountable algebra $\Cc(\omega_1)$. It is shown in~\cite{BFPP} that $C(\Kcc)$ fails the BFPP. The second part transfers this failure to $\omega^*$, and this is where CH is used, through the following fact; cf.~\cite[Theorems 1.4.4 and 1.8.1]{vanMill}.

\begin{theorem}[CH]\label{thm:CH}
Every zero-dimensional compact $F$-space of weight $\mathfrak{c}$ is a continuous retract of $\omega^*$.
\end{theorem}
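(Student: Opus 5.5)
Under CH, a zero-dimensional compact $F$-space $K$ of weight $\cont$ is the Stone space of a Boolean algebra $A$ of size $\cont$ (the clopen algebra of $K$) that has the countable separation property. This is the Boolean translation of $K$ being an $F$-space: for countable families $\{a_n\}$, $\{b_m\}$ with $a_n\wedge b_m=0$ there is $c$ with $a_n\le c$ and $b_m\wedge c=0$. By Stone duality, a continuous retraction of $\omega^*$ onto (a copy of) $K$ is the same thing as a Boolean retraction of $\Pow(\omega)/\fin$ onto a copy of $A$. So the plan is to build an embedding $e\colon A\to\Pow(\omega)/\fin$ together with a homomorphism $r\colon\Pow(\omega)/\fin\to A$ satisfying $r\circ e=\id_A$. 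Dually, $r$ gives an embedding $K\hookrightarrow\omega^*$ and $e$ gives a continuous surjection $\omega^*\to K$ that is the identity on the image.

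The construction is a transfinite back-and-forth of length $\omega_1$, using CH to write both algebras as increasing unions of countable subalgebras. Enumerate $A=\bigcup_{\alpha<\omega_1}A_\alpha$ and $\Pow(\omega)/\fin=\bigcup_{\alpha<\omega_1}P_\alpha$ as continuous increasing chains of countable subalgebras. By recursion we build countable subalgebras $A_\alpha'\supseteq A_\alpha$ and $P'_\alpha\supseteq P_\alpha$, an embedding $e_\alpha\colon A'_\alpha\to P'_\alpha$, and a homomorphism $r_\alpha\colon P'_\alpha\to A'_\alpha$ with $r_\alpha e_\alpha=\id$, each stage extending the previous ones. Limit stages are handled by taking unions.

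At successor stages there are two one-element extension problems.

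\emph{Extending $r$.} Given $x\in\Pow(\omega)/\fin$, we need $r(x)\in A$ such that, for $p\in P'_\alpha$, $p\le x$ implies $r(p)\le r(x)$ and $x\le p$ implies $r(x)\le r(p)$. This asks for an element of $A$ separating the countable families $\{r(p):p\le x\}$ and $\{r(q):q\ge x\}$ (more precisely, sitting between them). Countable separation in $A$ supplies exactly this, once we check that $r(p)\le r(q)$ whenever $p\le x\le q$, which is immediate from $p\le q$. This is the Sikorski-style extension argument, with countable separation replacing completeness.

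\emph{Extending $e$.} Given $a\in A$, we must find $y\in\Pow(\omega)/\fin$ that realizes the right cut over the countable algebra $e(A'_\alpha)$, together with $r(y)=a$. Both the image $e$ and the kernel-related data of $r$ must be respected. Concretely, we need $y$ above $e(b)$ for $b\le a$ and below $e(c)$ for $c\ge a$, and also $y$ must lie between the relevant elements of $P'_\alpha$ mapped by $r$ below $a$ or above $a$. I would handle this by taking $y$ to separate the countable ideal generated by $\{p\in P'_\alpha: r(p)\le a\}$ from the countable filter generated by $\{q: r(q)\ge a\}$, using that $\Pow(\omega)/\fin$ is countably saturated in the sense of having no $(\omega,\omega)$-gaps (Hausdorff). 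With $y$ in hand we set $e(a)=y$, $r(y)=a$, and extend $r$ to $P'_\alpha\langle y\rangle$ by the usual formula. Compatibility follows from the choice of cut.

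The main obstacle I expect is the $e$-extension. We need $y$ strictly inside the gap so that $e$ stays injective, meaning $y$ must avoid collapsing distinct elements. We also need the extended $r$ to remain a homomorphism, which is where the kernel of $r$ (a countably generated ideal on $P'_\alpha$) enters the cut. Checking that the ideal and filter above are disjoint reduces to $r$ being a homomorphism with $r(p)\le a\le r(q)$. This forces $p\wedge\neg q\in\ker r$, and then one can pass to $P/\ker r$, where Hausdorff's no-countable-gaps theorem still applies because the quotient of $\Pow(\omega)/\fin$ by a countably generated ideal has countable separation. Interleaving the two steps via a bookkeeping enumeration then ensures both $e$ and $r$ end up total on $A$ and $\Pow(\omega)/\fin$ respectively.
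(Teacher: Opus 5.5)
Your overall architecture is sound. Dualizing, an embedding $e\colon A\to\Pow(\omega)/\fin$ and a homomorphism $r$ with $r\circ e=\id_A$ do give the retraction, and your $r$-step is exactly the CSP extension of Proposition~\ref{prop:extend}. (This differs from the route the paper cites, van Mill's proof through retractions onto closed $P$-sets of $\omega^*$. It also differs from the paper's own Lemma~\ref{lem:retract}, which yields the statement under CH; there the $e$-side is never built, since $\Pow(\omega)/\fin$ is grown around $A$ by push-outs and identified via uniqueness.)

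\textbf{The gap is the $e$-step.} First, the cut you propose cannot be filled. If $k\in P'_\alpha$ is a nonzero element of $\ker r$, then $r(k)=0\le a$ and $r(-k)=1\ge a$, so you would need $k\le y\le -k$. Since $r$ is onto, $\ker r=\{0\}$ would make $r$ an isomorphism. So whenever $A\not\cong\Pow(\omega)/\fin$ (e.g.\ $A=\Cc(\omega_1)$, which has atoms), such $k$ must eventually appear. Second, the condition points the wrong way for $r$. By the one-generator criterion, $r$ extends with $r(y)=a$ iff every $p\in P'_\alpha$ with $p\le y$ has $r(p)\le a$, and every $q\in P'_\alpha$ with $y\le q$ has $r(q)\ge a$. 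This restricts which elements lie below $y$; putting $y$ above all $p$ with $r(p)\le a$ is the converse condition and does not imply it. Passing to the quotient by the ideal $J$ generated by $\ker r$ does not help:
\begin{itemize}
\item A lift $y$ of a separator only gives $e(b)\wedge -y\in J$, and $J$ is typically not a $P$-ideal, so these discrepancies cannot be absorbed as in Lemma~\ref{lem:lifting}.
\item The separator may be the class of some $q_0\in P'_\alpha$ with $r(q_0)\neq a$ (e.g.\ $r(q_0)$ the least element of $r[P'_\alpha]$ above $a$). Then $y\mathbin{\triangle}q_0\le k$ for some $k\in\ker r$, forcing $r(y)=r(q_0)\neq a$.
\end{itemize}
Absence of $(\omega,\omega)$-gaps only produces some element of an interval. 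It cannot keep $y$ away from unwanted elements of $P'_\alpha$.

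\textbf{The repair} is to use the cut coming from $e$, together with the full countable saturation of $\Pow(\omega)/\fin$. Let $I$ be the ideal of $P'_\alpha$ generated by $e[\below{A'_\alpha}a]$, and $F$ the filter of $P'_\alpha$ generated by $\{e(c):c\in A'_\alpha,\ c\ge a\}$; then $I\le F$ pointwise.
\begin{itemize}
\item For $p\in P'_\alpha\setminus I$ pick a nonzero $z_p\le p$ disjoint from every element of $I$.
\item For $q\in P'_\alpha\setminus F$ pick a nonzero $w_q\le -q$ below every element of $F$.
\end{itemize}
Both exist because decreasing $\omega$-sequences of nonzero elements of $\Pow(\omega)/\fin$ have nonzero lower bounds. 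Shrink these countably many elements to be pairwise disjoint, and use the CSP to place $y$ above $I\cup\{w_q\}$ and below $F\cup\{-z_p\}$. Then $p\le y$ iff $p\in I$, and $y\le q$ iff $q\in F$. Consequently:
\begin{itemize}
\item $r(y):=a$ is compatible: if $p\le y$ then $p\le e(b)$ for some $b\le a$, so $r(p)\le b\le a$, and dually.
\item $e(a):=y$ extends $e$ homomorphically.
\item Injectivity of the extended $e$ follows automatically from $r\circ e=\id$.
\end{itemize}
So the exactness of the cut is what $r$ needs. Injectivity of $e$, which you flagged as the main obstacle, is not the issue.
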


Under CH, $\Kcc$ is such a space, hence a retract of $\omega^*$. On the other hand, if $L$ is a continuous retract of $K$ and $C(K)$ has the BFPP, then so does $C(L)$~\cite{BFPP}. Therefore $C(\omega^*)$ cannot have the BFPP under CH. In this paper, we generalize this argument under the following set-theoretic assumption:
\begin{equation}\tag{TSF}\label{eq:TSF}
\Pow(\omega)\text{ is tightly }\sigma\text{-filtered}.
\end{equation}
This axiom holds under CH and in the $\aleph_2$-Cohen model, obtained by adding $\aleph_2$ Cohen reals to a model of CH; see~\cite{Koppelberg,Geschke,Steprans}. We recall the definition in Section~\ref{sec:prelim}.
This topic can be found in the literature under different terminologies. Tight $\sigma$-filteredness was introduced by Koppelberg~\cite{Koppelberg} and developed by Geschke~\cite{Geschke}, who characterized it using additive $\sigma$-skeletons. Avil\'es and Brech~\cite{AB} formulated the same notion through iterated push-outs over countable subalgebras. The subalgebras called \emph{good} by Stepr\=ans~\cite{Steprans}, \emph{$\aleph_0$-ideal} by Dow and Hart~\cite{DH}, and \emph{$\sigma$-subalgebras} in~\cite{Geschke,AB} express the same countable generation condition on cuts. In the language of Dow and Hart~\cite{DH}, the axiom can also be formulated as
\[
\axiomTSF\quad\Longleftrightarrow\quad
\cont\leq\aleph_2\text{ and }\Pow(\omega)/\fin
\text{ is Cohen--Parovi\v cenko}.
\]
We justify this equivalence after Proposition~\ref{prop:identification}. The quotient by $\fin$ and the cardinal bound are essential here: $\Pow(\omega)$ has atoms, whereas Cohen--Parovi\v cenko algebras are atomless, and $\Pow(\omega)/\fin$ is Cohen--Parovi\v cenko even in Cohen models with larger continuum~\cite[Proposition 5.4]{DH}.

Another formulation involves the \emph{weak Freese--Nation property} (WFN): a Boolean algebra $B$ has this property if there is $f\colon B\to[B]^{\leq\aleph_0}$ such that, whenever $a\leq b$, some $d\in f(a)\cap f(b)$ satisfies $a\leq d\leq b$. For $\Pow(\omega)$, Geschke's results give
\[
\axiomTSF\quad\Longleftrightarrow\quad
\WFN\bigl(\Pow(\omega)\bigr)\text{ and }\cont\leq\aleph_2;
\]
see~\cite[Corollary 5.5]{Geschke}. Further consequences of WFN were developed by Fuchino, Koppelberg and Shelah~\cite{FKS}, and by Fuchino, Geschke and Soukup~\cite{FGS}.

Parovi\v cenko's theorem~\cite{Parovicenko} characterizes $\Pow(\omega)/\fin$ under CH. For regular $\cont$, Avil\'es and Brech~\cite{AB} constructed a canonical tightly $\sigma$-filtered Boolean algebra $\BB$ of size $\cont$. Under~\eqref{eq:TSF}, the continuum is automatically regular and $\BB\cong\Pow(\omega)/\fin$; we spell out the deduction from the literature in Proposition~\ref{prop:identification}. We show that every tightly $\sigma$-filtered algebra of size at most $\cont$ with the countable separation property is a retract of $\BB$, and that $\Cc(\omega_1)$ is tightly $\sigma$-filtered exactly when $\Pow(\omega)$ is. This yields our first main result.

\begin{mainthm}\label{thm:A}
Assume that $\Pow(\omega)$ is tightly $\sigma$-filtered. Then $\Kcc$ is a continuous retract of $\omega^*$. Consequently, $C(\omega^*)$ fails the ball fixed point property. In particular, these conclusions hold in the $\aleph_2$-Cohen model.
\end{mainthm}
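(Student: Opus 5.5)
By Stone duality, a continuous retraction $\omega^*\to\Kcc$ is the same thing as a Boolean retraction. We need an embedding $e\colon\Cc(\omega_1)\to\Pow(\omega)/\fin$ together with a homomorphism $r\colon\Pow(\omega)/\fin\to\Cc(\omega_1)$ satisfying $r\circ e=\id$. So the plan is to establish, in order:
\begin{enumerate}
\item Under \eqref{eq:TSF}, $\cont$ is regular and $\Pow(\omega)/\fin\cong\BB$, the Avil\'es--Brech algebra. This is Proposition~\ref{prop:identification}, assembled from Geschke's characterization and the uniqueness of $\BB$ among tightly $\sigma$-filtered algebras of size $\cont$ with the appropriate extension property.
\item $\Cc(\omega_1)$ is tightly $\sigma$-filtered. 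This is where the equivalence announced in the introduction is used. I would prove the needed direction directly: a tight $\sigma$-filtration of $\Pow(\omega)$ forces $\cont\le\aleph_2$ and WFN. For $\Cc(\omega_1)$ one filters by $\Cc(\alpha)$-type pieces, namely the subalgebras generated by countable subsets of $\alpha$ together with their complements, for $\alpha<\omega_1$. One then checks that each extension is a countably generated relatively-$\sigma$ step, i.e.\ that cuts over the previous stage are countably generated. Since $|\Cc(\omega_1)|=\aleph_1^{\aleph_0}=\cont$, the set-theoretic input is only that this length-$\cont$ filtration can be organized along the tight skeleton. I would derive that from an embedding of $\Cc(\omega_1)$ into $\Pow(\omega)$ as a $\sigma$-subalgebra (good subalgebra) and the hereditary behaviour of tight $\sigma$-filteredness for such subalgebras.
\item $\Cc(\omega_1)$ has the countable separation property. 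Given countable families $\{a_n\}\le$-below-disjoint from $\{b_m\}$, we separate as follows. If some $b_m$ is cocountable, then all $a_n$ are countable, and we take their union; otherwise the $b_m$ are all countable, and we take the complement of their union. Routine.
\item Every tightly $\sigma$-filtered algebra $A$ with $|A|\le\cont$ and the countable separation property is a retract of $\BB$. This is the universality/retract theorem stated in the introduction. The proof is a back-and-forth along the tight filtration of $A$ and the canonical construction of $\BB$ by iterated push-outs over countable subalgebras. At each stage one extends the embedding and the retraction simultaneously. Countable separation is exactly what allows defining the retraction on each new generator of $\BB$, whose cut over the current countable base must be sent to an element of $A$ separating the images.
\end{enumerate}

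Combining (a)--(d) gives a Boolean retraction of $\Pow(\omega)/\fin$ onto $\Cc(\omega_1)$, hence a continuous retraction of $\omega^*$ onto $\Kcc$. Then $C(\Kcc)$ fails the BFPP by \cite{BFPP}, and the BFPP passes to retracts \cite{BFPP}, so $C(\omega^*)$ fails the BFPP. The Cohen model clause follows since \eqref{eq:TSF} holds there \cite{Koppelberg,Geschke,Steprans}.

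The main obstacle is step (d). When extending the retraction to a new generator $x$ of $\BB$, the elements of $A$ below and above $x$ must be separated. They are only countably many, because the push-out is over a countable subalgebra. However, the extension must remain compatible with all previously defined values, which requires the tightness of the filtration of $A$ to ensure the relevant cuts are countably generated. I would handle this by transfinite induction with a bookkeeping of length $\cont$, using regularity of $\cont$ from step (a) to close off at limit stages.
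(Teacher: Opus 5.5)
Steps (a), (c) and (d), and the final duality argument, follow the paper's outline (Proposition~\ref{prop:identification}, Lemma~\ref{lem:CcCSP}, Lemma~\ref{lem:retract}). Step (b), however, has a genuine gap, and it is exactly where \eqref{eq:TSF} has to be used.

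\textbf{The chain in (b) is not continuous.} Your chain is $B_\alpha=\{x\in\Cc(\omega_1):x\subseteq\alpha\text{ or }\omega_1\setminus x\subseteq\alpha\}$, $\alpha<\omega_1$. At a limit $\alpha$, the algebra $B_\alpha$ is not countably generated over $D_\alpha=\bigcup_{\beta<\alpha}B_\beta$. To see this, let $J$ be the ideal of subsets of $\alpha$ that are bounded in $\alpha$. The image of $D_\alpha$ in $B_\alpha/J$ is $\{0,1\}$, yet $B_\alpha/J$ contains a copy of $\Pow(\omega)/\fin$. So $B_\alpha=\gen{D_\alpha\cup Q}$ with $Q$ countable would make that quotient countable.

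Hence your claim that each extension is a countably generated relatively-$\sigma$ step is false. The remark about a length-$\cont$ filtration does not touch the problem: the chain has length $\omega_1$, and the obstruction occurs at every countable limit.

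\textbf{The fallback cannot work.} $\Cc(\omega_1)$ does not embed into $\Pow(\omega)$ at all. The family $\{\{\alpha\}:\alpha<\omega_1\}$ is an uncountable family of pairwise disjoint nonzero elements, while every such family in $\Pow(\omega)$ is countable. Replacing $\Pow(\omega)$ by $\Pow(\omega)/\fin$ would require constructing a $\sigma$-embedding independently. In the paper, the embedding into $\Pow(\omega)/\fin\cong\BB$ is a consequence of the tight $\sigma$-filteredness of $\Cc(\omega_1)$, not a route to it.

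\textbf{The missing idea} is to bridge each limit gap by applying \eqref{eq:TSF} to the single algebra $B_\alpha\cong\Pow(\omega)$. Choose ordinals $e_0\le e_1\le\cdots$ cofinal in $\alpha$. Then $\below{B_\alpha}{e_n}=\Pow(e_n)$ and $\gen{\bigcup_n\below{B_\alpha}{e_n}}=D_\alpha$.

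The paper's Lemma~\ref{lem:relative} shows that a tightly $\sigma$-filtered $B$ is tightly $\sigma$-filtered over $A=\gen{\bigcup_n\below B{e_n}}$ for any increasing sequence $(e_n)$. The proof takes a skeleton chain $(S_\xi)$ of $B$ and checks $\gen{A\cup S_\xi}\leq_\sigma B$. This uses the decomposition of elements of $\gen{\below B{e_n}\cup S_\xi}$ as $x\vee y$ with $x\le e_n$ and $y\in\below{S_\xi}{-e_n}$. Inserting these relative filtrations at the limit stages and concatenating (Proposition~\ref{prop:concatenate}) yields Theorem~\ref{thm:equivalence}.

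\textbf{A smaller point on (d):} you misplace the difficulty. Start the push-out iteration at $B_0=A$. Every step is then a push-out over a countable subalgebra, so the cuts are countably generated. Proposition~\ref{prop:extend} together with the CSP extends $\id_A$ at every step, with no compatibility problem. Tightness of $A$ is used only to see, via Proposition~\ref{prop:concatenate}, that the resulting algebra is tightly $\sigma$-filtered and hence isomorphic to $\BB$.
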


In contrast, under $\MA+\neg\mathrm{CH}$ the algebra $\Cc(\omega_1)$ is not tightly $\sigma$-filtered; see Corollary~\ref{cor:MA}. Whether $C(\omega^*)$ can have the BFPP in some model remains open.

The proof of Theorem~\ref{thm:CH} in~\cite{vanMill} uses the fact that, under CH, nonempty closed $P$-sets of $\omega^*$ are retracts of $\omega^*$. Under Stone duality, a closed subset of the Stone space of $B$ corresponds to a quotient $B/I$, and being a $P$-set corresponds to $I$ being a $P$-ideal. Our second result is an analogue for tightly $\sigma$-filtered algebras.

\begin{mainthm}\label{thm:B}
Let $B$ be a tightly $\sigma$-filtered Boolean algebra with the countable separation property. If $I$ is a proper $\aleph_1$-generated $P$-ideal of $B$, then the quotient map $B\to B/I$ has a Boolean homomorphic right inverse. In particular, $B/I$ is a Boolean retract of $B$.
\end{mainthm}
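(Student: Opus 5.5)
We construct the homomorphic right inverse $s\colon B/I\to B$ by transfinite recursion along a filtration of $B/I$ of length $\omega_1$, and we use tight $\sigma$-filteredness of $B$ to guarantee that every countable stage can be extended. Let $\pi\colon B\to B/I$ be the quotient map. Fix generators $\{a_\alpha:\alpha<\omega_1\}$ of $I$; since $I$ is a $P$-ideal, we may assume they are $\leq$-increasing modulo $I$-small pieces, that is, almost increasing: $a_\alpha\setminus a_\beta$ lies in the ideal generated by earlier elements. Indeed, because $I$ is a $P$-ideal, we can replace $a_\alpha$ by a pseudo-union of $\{a_\beta:\beta\le\alpha\}$ and obtain a sequence with $a_\beta\le a_\alpha$ modulo finitely generated pieces. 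Working literally in $B$ with $I$ $\aleph_1$-generated, we aim for a cofinal chain $(a_\alpha)_{\alpha<\omega_1}$ with $a_\beta\le a_\alpha$ for $\beta<\alpha$.

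The idea is to find a subalgebra $C\subseteq B$ with $C\cap I=\{0\}$ and $C+I=B$. Then $\pi\restriction C$ is an isomorphism onto $B/I$, and its inverse is the required homomorphism. We build $C$ as the union of an increasing chain of subalgebras satisfying $C\cap I=\{0\}$. Each new element $b\in B$ must be corrected to some $b'$ with $b'\triangle b\in I$ such that $\langle C,b'\rangle\cap I=\{0\}$. This holds precisely when, for every $c\in C$, the conditions $c\wedge b'\in I$ and $c\wedge -b'\in I$ force $c\wedge b'=0$ and $c\wedge -b'=0$ respectively.

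We use the tight $\sigma$-filtration $(B_\xi)_{\xi<\kappa}$ of $B$, in the form of additive $\sigma$-skeletons or the iterated push-out description due to Geschke and to Avil\'es--Brech, together with the fact that each $b$ is added over $B_\xi$ with countably generated cuts. After enlarging the skeleton so that the chain $(a_\alpha)$ is respected, we proceed along it. At stage $\xi$, the new generator $b$ has its lower cut $\{c\in C_\xi: c\le b\}$ and its upper cut countably generated, say by $(\ell_n)$ and $(u_n)$. The correction needed is an element $b'$ that is $I$-equivalent to $b$, satisfies $\ell_n\le b'\le u_n$ for all $n$, and has lower and upper cuts that, relative to $C_\xi$, agree with those of $b$ modulo $I$. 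Because $I$ is a $P$-ideal, the countably many defects $(\ell_n\setminus b)$ and $(b\setminus u_n)$ that lie in $I$ are dominated by a single $a\in I$. The countable separation property then lets us interpolate, producing $b'$ between the countable families $\{\ell_n\}$ and $\{u_n\}$ modulo $a$.

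The main obstacle is controlling the cuts so that $\langle C_\xi,b'\rangle\cap I=\{0\}$. Only countable information about $b'$ over $C_\xi$ may matter, and this is exactly where tightness (countable generation of cuts) is essential. Without it we would face uncountably many constraints, and the $P$-ideal and countable-separation arguments would fail. We would handle this first for countable $C_\xi$, where the argument is the classical CH-style construction, and then transfer it via the $\sigma$-skeleton, relying on the fact that relative cuts in $\sigma$-subalgebras are countably generated. Limit stages are unions, so they preserve $C\cap I=\{0\}$ trivially. Finally, $C+I=B$ holds because every generator $b$ is $I$-equivalent to some element $b'$ that we placed in $C$.
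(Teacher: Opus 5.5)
Your overall plan is sound. A subalgebra $C\subseteq B$ with $C\cap I=\{0\}$ and $\pi[C]=B/I$ is the same thing as a homomorphic right inverse, and your criterion for $\gen{C\cup\{b'\}}\cap I=\{0\}$ is correct. Your correction step is essentially the extension step in the proof of Lemma~\ref{lem:lifting}, once it is made precise: the correction must lie in $I$, dominate the defects, and be disjoint from the other side, which needs the $P$-ideal property and the CSP together.

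\textbf{The gap: countable generation of the cuts.} The problem is your claim that the cuts you must respect are countably generated.
\begin{enumerate}
\item You name the wrong cut. If $\ell_n\in\below{C_\xi}b$, then $\ell_n\wedge -b=0$, so there are no defects to absorb, and $b'=b$ already lies between $(\ell_n)$ and $(u_n)$. Your own criterion actually demands that $c\le b'$ for every $c\in C_\xi$ with $c\wedge -b\in I$, and $c\wedge b'=0$ for every $c\in C_\xi$ with $c\wedge b\in I$. Since $\pi$ is injective on $C_\xi$, these are the two cuts of $\pi(b)$ over $\pi[C_\xi]$ in $B/I$.
\item You justify countable generation by the fact that $b$ has countably generated cuts over $B_\xi$. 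But $C_\xi$ is not $B_\xi$ (it contains no nonzero element of $I$), and $B_\xi\leq_\sigma B$ controls cuts in $B$, not cuts modulo $I$.
\end{enumerate}

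\textbf{The missing idea.} It is the claim inside the proof of Proposition~\ref{prop:quotient}. Take an additive $\sigma$-skeleton $\FF$ of $B$ and some $H\in\FF$ with $|H|\leq\aleph_1$ containing a generating set of $I$. Then for every $C\in\FF$ with $H\subseteq C$ one has $\below{\pi[C]}{\pi(b)}=\pi[\below C b]$: if $\pi(c)\le\pi(b)$, choose $i\in I\cap C$ above $c\wedge -b$ and replace $c$ by $c\wedge -i$. Hence $\pi[C]\leq_\sigma B/I$.

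This is the only place where $\aleph_1$-generation is used, and your argument never really uses that hypothesis. The chain $(a_\alpha)$ is built and then dropped, and ``enlarging the skeleton so that the chain is respected'' is never tied to any cut computation. That alone signals a problem: for arbitrary $P$-ideals the statement is the open Question~\ref{q:quotient}.

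\textbf{How to repair your recursion.} Keep your single recursion but organize it as follows.
\begin{enumerate}
\item Run the skeleton chain $(B_\xi)$ from $B_0=H$.
\item First lift $\pi[H]$ through a continuous chain of countable subalgebras, where every cut is countable.
\item Afterwards maintain $\pi[C_\xi]=\pi[B_\xi]$.
\end{enumerate}
Then each cut you need is taken over an algebra countably generated over some $\pi[B_\xi]\leq_\sigma B/I$, so it is countably generated by Proposition~\ref{prop:posex}(a) and (d). Organized this way, your proof becomes exactly the paper's: Proposition~\ref{prop:quotient} followed by Lemma~\ref{lem:lifting}.
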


Section~\ref{sec:prelim} collects the background on tightly $\sigma$-filtered Boolean algebras and their extensions. Section~\ref{sec:main} contains the retraction result, the analysis of $\Cc(\omega_1)$ and the proof of Theorem~\ref{thm:A}. Section~\ref{sec:quotients} is devoted to Theorem~\ref{thm:B}. Section~\ref{sec:questions} lists the remaining questions.

\section{Preliminaries}\label{sec:prelim}
Throughout the paper, $\gen X$ denotes the subalgebra generated by a subset $X$ of a Boolean algebra. Given a subalgebra $A\subseteq B$, we say that $B$ is \emph{countably generated over $A$} if $B=\gen{A\cup Q}$ for some countable $Q\subseteq B$. Recall from~\cite{AB} that an embedding $A\to B$ is a \emph{posex} (push-out separable extension) if it fits into a push-out diagram of embeddings over countable subalgebras. When $A\subseteq B$, we also say that $B$ is a posex of $A$.

A topological space $L$ is a \emph{continuous retract} of $K$ if there are continuous maps $e\colon L\to K$ and $r\colon K\to L$ with $r\circ e=\id_L$. Dually, a Boolean algebra $A$ is a \emph{Boolean retract} of $B$ if there are homomorphisms $e\colon A\to B$ and $r\colon B\to A$ with $r\circ e=\id_A$.

Given $b\in B$ and a subalgebra $A\subseteq B$, write
\[
\below A b=\{a\in A:a\leq b\}.
\]
When $b\in A$, this principal ideal is itself a Boolean algebra, with unit $b$ and relative complement $x\mapsto b\wedge -x$. We say that $A$ is a \emph{$\sigma$-subalgebra} of $B$, written $A\leq_\sigma B$, if $\below A b$ is countably generated for every $b\in B$.

We say that $B$ is \emph{tightly $\sigma$-filtered over $A$} if there exists a continuous increasing chain $(B_\alpha)_{\alpha\leq\lambda}$ of subalgebras, starting at $B_0=A$ and ending at $B_\lambda=B$, such that each successor step $B_\alpha\to B_{\alpha+1}$ is a posex. An algebra is \emph{tightly $\sigma$-filtered} if it is tightly $\sigma$-filtered over $\{0,1\}$.

\begin{definition}\label{def:skeleton}
An \emph{additive $\sigma$-skeleton} of a Boolean algebra $B$ is a family $\FF$ of subalgebras of $B$ such that:
\begin{enumerate}
\item $\{0,1\}\in\FF$;
\item $\gen{\bigcup\mathcal G}\in\FF$ for every $\mathcal G\subseteq\FF$;
\item every infinite subalgebra $X\subseteq B$ is contained in some $X'\in\FF$ with $|X'|=|X|$;
\item every member of $\FF$ is a $\sigma$-subalgebra of $B$.
\end{enumerate}
\end{definition}

\begin{proposition}[Avil\'es and Brech~\cite{AB}]\label{prop:posex}
Let $A\subseteq B$ be Boolean algebras.
\begin{enumerate}
\item $B$ is a posex of $A$ if and only if $B$ is countably generated over $A$ and $A\leq_\sigma B$.
\item $A\leq_\sigma B$ if and only if every subalgebra $C$ with $A\subseteq C\subseteq B$ which is countably generated over $A$ is a posex of $A$.
\item If $B$ is countable, then $B$ is a posex of $A$.
\item If $B$ is a posex of $A$ and $A\subseteq C\subseteq B$, with $C$ countably generated over $A$, then $C$ is a posex of $A$ and $B$ is a posex of $C$.
\item If $L$ and $M$ are countably generated ideals of $A$ such that $\ell\wedge m=0$ for all $\ell\in L$ and $m\in M$, then $A$ has a posex $A(z)=\gen{A\cup\{z\}}$ with $\below A z=L$ and $\below A{-z}=M$.
\item If $A_0\subseteq A_1\subseteq\cdots$ are $\sigma$-subalgebras of $B$, then $\bigcup_{n<\omega}A_n\leq_\sigma B$.
\item $B$ is tightly $\sigma$-filtered if and only if it has an additive $\sigma$-skeleton. This characterization is due to Geschke~\cite{Geschke}.
\item If $\FF$ is an additive $\sigma$-skeleton of $B$, $C\in\FF$, and $C\subseteq C'\subseteq B$ with $C'$ countably generated over $C$, then there exists $C''\in\FF$ with $C'\subseteq C''$ and $C''$ countably generated over $C'$.
\end{enumerate}
\end{proposition}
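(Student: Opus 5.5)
The statement collects eight facts from~\cite{AB} and~\cite{Geschke}. I would prove them in the order (a), (c), (d), (b), (f), (e), (h), (g), because the later items rest on the first one. Throughout I use the standard amalgamation criterion. Let $C_0\subseteq A$ and $C_0\subseteq C_1\subseteq B$, and let $B=\gen{A\cup C_1}$. Then $B$ is the push-out of $A\leftarrow C_0\rightarrow C_1$ if and only if the following holds: whenever $a\in A$, $c\in C_1$ and $a\wedge c=0$, there is $c_0\in C_0$ with $a\leq c_0$ and $c\wedge c_0=0$.

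\textbf{Item (a).} For the forward direction, assume $B$ is the push-out of countable $C_0\subseteq C_1$. Then $B$ is countably generated over $A$ by $C_1$. For the $\sigma$-property, write $b\in B$ as a finite join $\bigvee_i (a_i\wedge c_i)$ with $a_i\in A$ and $c_i\in C_1$. By the criterion, $\below A b$ is generated by the countably many elements of $C_0$ below the relevant $c_i$'s, together with finitely many $a_i$'s, so it is countably generated. For the backward direction, fix a countable $Q$ with $B=\gen{A\cup Q}$. Build countable $C_0^n\subseteq A$ by a closure argument of length $\omega$. At stage $n$, for each $c$ in the countable algebra $\gen{C_0^n\cup Q}$, put into $C_0^{n+1}$ a countable generating set of the ideal $\below A{-c}$; this is possible by $A\leq_\sigma B$. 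Let $C_0=\bigcup_n C_0^n$ and $C_1=\gen{C_0\cup Q}$. Then the criterion holds by construction, so $B$ is a posex of $A$.

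\textbf{Items (b)--(f).} For (c), if $B$ is countable then each $\below A b$ is countable, so (a) applies directly. For (d), $C$ is countably generated over $A$, and since $\sigma$-subalgebras inherit downward, $A\leq_\sigma C$. For the step from $C$ to $B$, $B$ is countably generated over $C$. To get $C\leq_\sigma B$, I would express $\below C b$ through the countable push-out data, using (a) once more. Item (b) follows from (a) and (d). For (f), $\below{\bigcup_n A_n} b=\bigcup_n\below{A_n}b$ is an increasing union of countably generated ideals, so it is countably generated. For (e), take $A(z)$ to be $A[z]$ modulo the relations $\ell\leq z$ for $\ell\in L$ and $m\leq -z$ for $m\in M$. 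Equivalently, in Stone space terms, adjoin a clopen set separating the open sets $\bigcup L$ and $\bigcup M$. Disjointness of $L$ and $M$ shows that $A$ embeds. Computing $\below A z=L$ and $\below A{-z}=M$ then uses only that $L$ and $M$ are ideals. The posex property follows from (a), since countable generation of $L$ and $M$ gives the $\sigma$-condition for elements of the form $a\wedge z$ and $a\wedge -z$.

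\textbf{Item (h).} Apply the skeleton axiom (c) to the algebra generated by $C'$ together with a countable elementary closure, iterating $\omega$ times. At each stage, use (b) and (d) of Definition~\ref{def:skeleton}, so that the resulting member $C''$ of $\FF$ adds only countably many generators over $C'$.

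\textbf{Item (g), the main obstacle.} This is Geschke's theorem, and I expect the obstacle to be the direction that builds a skeleton from a filtration. Given a filtration $(B_\alpha)_{\alpha\leq\lambda}$, fix for each successor step countable push-out data $C_0^\alpha\subseteq C_1^\alpha$. Let $\FF$ be the family of algebras $\gen{\bigcup_{\alpha\in S}C_1^\alpha}$, where $S$ ranges over the \emph{closed} index sets, meaning that $C_0^\alpha\subseteq\gen{\bigcup_{\beta\in S\cap\alpha}C_1^\beta}$ for each $\alpha\in S$. Closure under unions is then immediate. Showing that such algebras are $\sigma$-subalgebras needs an induction on $\lambda$ using (a) and (f). For the converse, given a skeleton $\FF$, enumerate $B=\{b_\xi\}$. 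Using axiom (c) together with (h), build a continuous chain in $\FF$ in which each step is countably generated. Such a chain is a filtration, because each step is a posex by (b) and axiom (d).
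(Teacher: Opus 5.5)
The paper does not prove this proposition; it quotes it from \cite{AB} and, for (g), from \cite{Geschke}. Your sketches of (a)--(f) are fine in substance. In (d), the fact you need is just that a countably generated extension, inside $B$, of a $\sigma$-subalgebra of $B$ is again a $\sigma$-subalgebra. In (e), a general element $(a_1\wedge z)\vee(a_2\wedge -z)$ reduces to the forms $a\wedge z$ and $a\wedge -z$ only after splitting along the partition generated by $a_1,a_2$.

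\textbf{The gap in (g).} The genuine gap is in the forward direction of (g), which is the real content of the proposition. Indexing $\FF$ by closed sets $S$ is the right choice. However, an induction on $\lambda$ using only items (a) and (f) cannot show that $D_S=\gen{\bigcup_{\alpha\in S}C_1^\alpha}$ is a $\sigma$-subalgebra. Successor steps go through for every $S$, and (f) together with transitivity of $\leq_\sigma$ handles limits of countable cofinality. At a limit $\gamma$ of uncountable cofinality, though, $\below{D_{S\cap\gamma}}b$ is an uncountable increasing union of countably generated ideals.

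Your outline never uses closedness, and without it the claim is false. Adjoin $u$, then free generators $x_\alpha$ ($\alpha<\omega_1$). At stage $\omega_1+\alpha$, take $C_0=\gen{x_\alpha\wedge u}$ and $C_1=\gen{x_\alpha\wedge u,y_\alpha}$ with $y_\alpha$ free. If $S$ consists of these stages, then $\below{D_S}u$ contains every $x_\alpha\wedge u$ but is not countably generated. Indeed, an element of $D_S$ below $u$ that does not involve the index $\alpha$ cannot lie above $x_\alpha\wedge u$, since $u\notin D_S$.

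The missing step is the following. For $\beta\in S$ one has $C_0^\beta\subseteq D_{S\cap\beta}$, so your amalgamation criterion makes $B_{\beta+1}$ the push-out of $B_\beta\leftarrow D_{S\cap\beta}\to D_{S\cap(\beta+1)}$. Applied to $-b$ and $d$, it shows that for $b\in B_\beta$ every element of $\below{D_{S\cap(\beta+1)}}b$ lies below an element of $\below{D_{S\cap\beta}}b$. By induction, for $b\in B_\alpha$ and every $\delta\geq\alpha$, the ideal $\below{D_{S\cap\delta}}b$ is generated by $\below{D_{S\cap\alpha}}b$. This handles limits of every cofinality. You also still owe the $\omega$-step closure argument producing a closed $S$ of size $|X|$ for axiom (c) of Definition~\ref{def:skeleton}.

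\textbf{The argument for (h).} Your argument for (h) fails as written. Applying axiom (c) to $C'$, or to anything containing it, produces a member of $\FF$ of cardinality $|C'|$, with no control over how it is generated over $C'$. Iterating, or invoking axiom (d), does not restore that control. The hypothesis $C\in\FF$, which you never use, is the point. Write $C'=\gen{C\cup Q}$ with $Q$ countable, and use axiom (c) to get a countable $X\in\FF$ containing $Q$. Then $C''=\gen{C\cup X}$ lies in $\FF$ by axiom (b) and is generated over $C'$ by $X$. This is exactly how the paper builds its chain in Lemma~\ref{lem:relative}. With (h) repaired this way, your converse direction of (g) goes through.
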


\begin{proposition}\label{prop:concatenate}
Let $\theta$ be a limit ordinal and $(B_\alpha)_{\alpha<\theta}$ an increasing sequence of Boolean algebras with $B_0=\{0,1\}$. Suppose that for every $0<\alpha<\theta$, $B_\alpha$ is tightly $\sigma$-filtered over $\bigcup_{\beta<\alpha}B_\beta$. Then $\bigcup_{\alpha<\theta}B_\alpha$ is tightly $\sigma$-filtered. In particular, if $A$ is tightly $\sigma$-filtered and $B$ is tightly $\sigma$-filtered over $A$, then $B$ is tightly $\sigma$-filtered.
\end{proposition}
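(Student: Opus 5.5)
The plan is to concatenate the given chains into a single continuous chain. For each $0<\alpha<\theta$, set $B_{<\alpha}:=\bigcup_{\beta<\alpha}B_\beta$. By hypothesis there is a continuous increasing chain $(B_{\alpha,\xi})_{\xi\leq\lambda_\alpha}$ with $B_{\alpha,0}=B_{<\alpha}$, $B_{\alpha,\lambda_\alpha}=B_\alpha$, and each successor step a posex. I will glue these chains in the lexicographic order of the pairs $(\alpha,\xi)$ with $0<\alpha<\theta$ and $\xi\leq\lambda_\alpha$. To avoid repeating terms, identify the endpoint $(\alpha,\lambda_\alpha)$ with the starting points of later blocks where the algebras coincide. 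Cleaner still: index by the ordinal sum $\sum_{0<\alpha<\theta}\lambda_\alpha$, which has order type some ordinal $\mu$. Define $C_\eta$ to be $B_{\alpha,\xi}$, where $\eta=\sum_{0<\gamma<\alpha}\lambda_\gamma+\xi$ with $\xi<\lambda_\alpha$. Finally put $C_\mu=\bigcup_{\alpha<\theta}B_\alpha$. Note $C_0=B_{1,0}=B_0=\{0,1\}$.

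Next I check the requirements. Monotonicity is clear, because within a block the chain increases. Across blocks, $B_{\alpha,\xi}\subseteq B_\alpha\subseteq B_{<\alpha'}=B_{\alpha',0}$ whenever $\alpha<\alpha'$.

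Successor steps come in two kinds.
\begin{enumerate}
\item If $\eta=\sum_{\gamma<\alpha}\lambda_\gamma+\xi$ with $\xi+1<\lambda_\alpha$, then $C_\eta\to C_{\eta+1}$ is the step $B_{\alpha,\xi}\to B_{\alpha,\xi+1}$, which is a posex.
\item If $\xi+1=\lambda_\alpha$, then $C_{\eta+1}$ is the start of block $\alpha+1$, namely $B_{<\alpha+1}=B_\alpha=B_{\alpha,\lambda_\alpha}$. So the step is again $B_{\alpha,\xi}\to B_{\alpha,\xi+1}$, a posex.
\end{enumerate}
Here $\alpha+1<\theta$ because $\theta$ is a limit. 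Blocks with $\lambda_\alpha=0$ contribute nothing and can be discarded.

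The main care point is continuity at limit indices $\eta\leq\mu$; this is the step I expect to need the most attention. There are three cases.
\begin{enumerate}
\item If $\eta$ is a limit inside a block, continuity comes from continuity of $(B_{\alpha,\xi})_\xi$.
\item If $\eta=\sum_{\gamma<\alpha}\lambda_\gamma$ for a limit $\alpha$, then $C_\eta=B_{<\alpha}$. Every earlier $C_{\eta'}$ lies in some $B_\beta$ with $\beta<\alpha$, and each $B_\beta$ is itself some earlier $C_{\eta'}$ (a block start). Hence $C_\eta=\bigcup_{\eta'<\eta}C_{\eta'}$.
\item At $\eta=\mu$, the same argument gives $\bigcup_{\eta<\mu}C_\eta=\bigcup_\alpha B_\alpha$.
\end{enumerate}
This produces the required chain, so $\bigcup_{\alpha<\theta}B_\alpha$ is tightly $\sigma$-filtered.

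For the ``in particular'' clause, take $\theta=\omega$, $B_0=\{0,1\}$, $B_1=A$ and $B_n=B$ for $n\geq2$. Then $B_1$ is tightly $\sigma$-filtered over $B_0$ by assumption, and $B_2$ is tightly $\sigma$-filtered over $B_1=A$. For $n\geq3$, $B_n$ is tightly $\sigma$-filtered over $B$ via the trivial chain of length $0$. The union is $B$, so $B$ is tightly $\sigma$-filtered.
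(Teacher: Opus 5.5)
Your proof is correct and is the paper's argument, written out in full: the paper's proof just says to concatenate the witnessing filtrations and take unions at limit stages. One bookkeeping point is that your limit cases never explicitly treat a limit index $\eta$ at which a block $\alpha$ of limit length $\lambda_\alpha$ ends (the same situation is what your case (b) actually needs when only boundedly many blocks below the limit have positive length), but nothing breaks, since there $C_\eta=B_\alpha=B_{\alpha,\lambda_\alpha}=\bigcup_{\xi<\lambda_\alpha}B_{\alpha,\xi}$ by continuity of the $\alpha$-th chain at its top.
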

\begin{proof}
Concatenate the witnessing filtrations, taking unions at limit stages.
\end{proof}

\begin{theorem}[Avil\'es and Brech~\cite{AB}]\label{thm:AB}
Assume that $\cont$ is regular. There exists a unique, up to isomorphism, tightly $\sigma$-filtered Boolean algebra $\BB$ of size $\cont$ such that, for every posex extension $A\to D$ with $|A|<\cont$, every embedding $A\to\BB$ extends to an embedding $D\to\BB$.

In particular, every tightly $\sigma$-filtered Boolean algebra of size at most $\cont$ embeds into $\BB$.
\end{theorem}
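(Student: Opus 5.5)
The plan is a Fraïssé-type construction. I build $\BB$ as the union of a continuous chain $(B_\alpha)_{\alpha<\cont}$ with $|B_\alpha|<\cont$, each successor step a posex. Bookkeeping will make every extension task eventually handled. Uniqueness and the ``in particular'' clause then follow by back-and-forth and forth arguments along tight $\sigma$-filtrations, using regularity of $\cont$ throughout.

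\textbf{Reducing tasks to countable data.} First I reduce an extension task to countable data. By Proposition~\ref{prop:posex}(a), a posex $A\to D$ is a push-out $D=A\otimes_C C'$ for some countable $C\subseteq A$ and countable $C'\supseteq C$.

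I also need a lemma on push-outs. If $A\subseteq B'$ are algebras containing $C$, then the canonical map $A\otimes_C C'\to B'\otimes_C C'$ is an embedding, and $B'\to B'\otimes_C C'$ is a posex. Both facts should follow from the concrete description of push-outs of Boolean algebras via Stone duality (fibred products of Stone spaces, with surjectivity preserved). The posex claim is immediate from the definition.

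So, given an embedding $A\to\BB$ with $|A|<\cont$, regularity of $\cont$ puts the image inside some $B_\beta$ with $\beta<\cont$. It then suffices to have, for every $\beta$, every countable $C\subseteq B_\beta$ and every countable $C'\supseteq C$, some later stage $B_\gamma$ into which $B_\beta\otimes_C C'$ embeds over $B_\beta$. Restricting that embedding to $A\otimes_C C'=D$ gives the desired extension.

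\textbf{Existence.} The number of such triples $(\beta,C,C')$, with $C'$ taken up to isomorphism over $C$, is at most $\cont\cdot\cont^{\aleph_0}=\cont$. A standard bookkeeping of length $\cont$ (each task scheduled at some stage past $\beta$) therefore handles them all. At the stage treating $(\beta,C,C')$ I set $B_{\alpha+1}=B_\alpha\otimes_C C'$, and $B_\beta\otimes_C C'$ embeds into it by the lemma. Each successor step is a posex and limits are unions, so $\BB$ is tightly $\sigma$-filtered. Its size is $\cont$, since every stage has size $<\cont$ and the chain has length $\cont$.

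\textbf{In particular, and uniqueness.} For the ``in particular'' clause, let $E$ be tightly $\sigma$-filtered with $|E|\le\cont$. Using an additive $\sigma$-skeleton (Proposition~\ref{prop:posex}(g),(h)) and regularity, I refine its filtration to a continuous chain of length at most $\cont$ whose successor steps are posexes from algebras of size $<\cont$. I then extend the embedding step by step using the extension property, taking unions at limit stages. Uniqueness is the same argument run back and forth between two such algebras $\BB$ and $\BB'$. Enumerate both in chains of length $\cont$ of skeleton members of size $<\cont$. At each step, extend the partial isomorphism so that its domain contains the next piece on one side and its range contains the next piece on the other. Each extension passes through countably generated posex steps, which Proposition~\ref{prop:posex}(d),(h) guarantees are available.

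\textbf{Main obstacle.} I expect the main difficulty to be the push-out lemma, namely that push-outs over countable subalgebras preserve embeddings and yield posexes. A secondary difficulty is arranging the back-and-forth so that the images at each stage are again $\sigma$-subalgebras; this is needed so that the next step is genuinely a posex over the current image.
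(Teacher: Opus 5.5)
The paper does not prove this theorem; it quotes it from \cite{AB}. The only part it re-runs is the existence construction, in the proof of Lemma~\ref{lem:retract}, and your existence argument is that same push-out iteration, with bookkeeping via $\cont^{\aleph_0}=\cont$ and regularity. Your push-out lemma is correct: dually, the induced map between fibred products of Stone spaces is onto. It makes explicit a step the paper uses tacitly. One small point: ``every stage has size $<\cont$ and the chain has length $\cont$'' only gives $|\BB|\leq\cont$. The lower bound needs cofinally many proper steps, which your bookkeeping does supply. The ``in particular'' clause, via a skeleton-refined filtration, is fine.

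The genuine gap is in uniqueness, and the difficulty you flag sits in the wrong place. At successor steps nothing can go wrong. If $Q\leq_\sigma\BB'$ and $R$ is countably generated over $Q$, then for each $b'\in\BB'$ the algebra $\gen{R\cup\{b'\}}$ is a posex of $Q$ by Proposition~\ref{prop:posex}(b), hence a posex of $R$ by~(d). So $R\leq_\sigma\BB'$ automatically.

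The problem is at limit stages of uncountable cofinality, which are present as soon as $\cont\geq\aleph_2$. Proposition~\ref{prop:posex}(f) covers only $\omega$-chains, and an $\omega_1$-union of $\sigma$-subalgebras need not be one. For example, if $(a_\eta)_{\eta<\omega_1}$ is strictly increasing below some $b\neq1$, the countable algebras $\gen{a_\eta:\eta<\xi}$ are $\sigma$-subalgebras, but their union $U$ has $\below U b$ not countably generated. So after such a limit the domain or range of your partial isomorphism may fail to be a $\sigma$-subalgebra. The next forth or back step is then no longer along a posex, and the extension property cannot be invoked. Enumerating $\BB$ and $\BB'$ by skeleton chains does not help, since the domains and ranges of your maps are not those skeleton members.

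What is missing is an invariant that survives unions. Require the domain $P_\xi$ to lie in an additive $\sigma$-skeleton $\FF$ of $\BB$, and the range $Q_\xi$ to lie in an additive $\sigma$-skeleton $\FF'$ of $\BB'$; this is preserved at limits by Definition~\ref{def:skeleton}(b). The image of a member of $\FF$ need not lie in $\FF'$, so each successor stage must be an $\omega$-length zig-zag:
\begin{enumerate}
\item By~(h), take $P^1\in\FF$ containing $P_\xi\cup\{b_\xi\}$ and countably generated over $P_\xi$, and extend $f_\xi$ to $f^1\colon P^1\to\BB'$.
\item Take $Q^1\in\FF'$ containing $f^1[P^1]\cup\{b'_\xi\}$ and countably generated over $Q_\xi$. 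By~(d), $Q^1$ is a posex of $f^1[P^1]$, so $(f^1)^{-1}$ extends to $Q^1$.
\item Take $P^2\in\FF$ containing the resulting image, and continue alternating.
\end{enumerate}
Then $P_{\xi+1}=\bigcup_nP^n\in\FF$ and $Q_{\xi+1}=\bigcup_nQ^n\in\FF'$, the union of the maps is an isomorphism between them, and all sizes stay below $\cont$. With this invariant your back-and-forth goes through.
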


\begin{definition}\label{def:CSP}
A Boolean algebra $B$ has the \emph{countable separation property} (CSP) if, for all countable $X,Y\subseteq B$ with $x\leq y$ for every $x\in X$ and $y\in Y$, there exists $z\in B$ with $x\leq z\leq y$ for all such $x,y$.
\end{definition}

The following consequence of the known results makes explicit why~\eqref{eq:TSF} suffices in Theorem~\ref{thm:A}.
\begin{proposition}\label{prop:identification}
If $\Pow(\omega)$ is tightly $\sigma$-filtered, then $\cont\in\{\aleph_1,\aleph_2\}$ and $\Pow(\omega)/\fin\cong\BB$.
\end{proposition}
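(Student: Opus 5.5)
We plan to assemble this proposition from known results; no new combinatorics should be needed. The three ingredients are Geschke's bound on the continuum, the transfer of tight $\sigma$-filteredness from $\Pow(\omega)$ to $\Pow(\omega)/\fin$, and the uniqueness clause of Theorem~\ref{thm:AB}.

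\emph{Step 1: the bound on $\cont$.} If $\Pow(\omega)$ is tightly $\sigma$-filtered, it has the WFN (cf.\ \cite{Geschke}). Geschke showed that tight $\sigma$-filteredness of $\Pow(\omega)$ implies $\cont\le\aleph_2$; this is the cardinal half of \cite[Corollary 5.5]{Geschke}. The underlying reason is that a tightly $\sigma$-filtered algebra contains no strictly increasing $\omega_2$-chains of the type forbidden by the skeleton structure. Since $\cont\ge\aleph_1$, we get $\cont\in\{\aleph_1,\aleph_2\}$. Both of these cardinals are regular, so the hypothesis of Theorem~\ref{thm:AB} holds and $\BB$ is defined.

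\emph{Step 2: $\Pow(\omega)/\fin$ is tightly $\sigma$-filtered.} Take an additive $\sigma$-skeleton $\FF$ of $\Pow(\omega)$, using Proposition~\ref{prop:posex}(g). Closing under (b) and (c), we may assume each member contains $\fin$ and the finite sets. We then set $\FF'=\{\pi[C]:C\in\FF,\ \fin\subseteq C\}$, where $\pi$ is the quotient map. We check the axioms of Definition~\ref{def:skeleton}:
\begin{itemize}
\item[(a)] This holds because $\pi[\gen{\fin}]=\{0,1\}$.
\item[(b)] This holds because $\pi$ commutes with generated subalgebras.
\item[(c)] Lift a countable generating set of the given subalgebra and apply (c) in $\Pow(\omega)$.
\item[(d)] Given $b=\pi(B_0)$, the ideal $\below{\pi[C]}{b}$ equals $\pi$ of $\{c\in C: c\subseteq^* B_0\}$. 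Since $C$ contains the finite sets, this is the ideal generated in $C$ by $\below C{B_0}$ together with the finite sets, which is countably generated.
\end{itemize}
Alternatively, this step can be cited directly from \cite{Geschke} or \cite{Koppelberg}, where quotients by such ideals are treated.

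\emph{Step 3: identify $\Pow(\omega)/\fin$ with $\BB$.} By Theorem~\ref{thm:AB}, it suffices to verify the extension property. Let $A\to D$ be a posex with $|A|<\cont$ and fix an embedding $A\subseteq\Pow(\omega)/\fin$. By Proposition~\ref{prop:posex}(a), $D$ is countably generated over $A$ and $A\le_\sigma D$, so we can extend one generator at a time. It is enough to realize a single generator $z$ with prescribed countably generated ideals $L=\below A z$ and $M=\below A{-z}$, as in Proposition~\ref{prop:posex}(e). To do this, we find $Z\subseteq\omega$ that is almost above every member of $L$ and almost disjoint from every member of $M$, and such that no other element of $A$ lies below $Z$ or below $-Z$. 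Countable separation in $\Pow(\omega)/\fin$ (Hausdorff's gap argument) produces some $Z$ separating $L$ from $M$. The difficulty is \emph{exactness}: $Z$ must avoid capturing the $<\cont$ other elements of $A$. We expect this to be the main obstacle. For $\cont=\aleph_1$, $A$ is countable and a diagonal construction works. For $\cont=\aleph_2$, we plan to use the skeleton $\FF'$: put $A$ inside a member $C\in\FF'$ of size $\aleph_1$, and then choose $Z$ generic over $C$. The $\sigma$-subalgebra property makes each relevant ideal countably generated, so that only countably many requirements need to be diagonalized at a time, organized along a continuous $\omega_1$-chain.

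If this direct route proves too delicate, we will instead invoke the Dow--Hart identification. By \cite[Proposition 5.4]{DH} and its converse for $\cont\le\aleph_2$, $\Pow(\omega)/\fin$ is Cohen--Parovi\v cenko. Cohen--Parovi\v cenko algebras of regular size $\cont$ with a tight $\sigma$-filtration satisfy exactly the extension property of Theorem~\ref{thm:AB}, so uniqueness gives $\Pow(\omega)/\fin\cong\BB$.
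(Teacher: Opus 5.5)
\textbf{Steps 1 and 2.} These are fine in substance, modulo two small points.

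Your Step 2 takes a different route from the paper. The paper transfers $\WFN$ to $\Pow(\omega)/\fin$ and then uses that $\WFN$ plus size $\le\aleph_2$ gives tight $\sigma$-filteredness. Your direct push-down of a skeleton is essentially Proposition~\ref{prop:quotient} with $I=\fin$, and that is a perfectly good alternative. Nothing guarantees $\gen{\fin}\in\FF$, though, so your verification of axiom (a) is unjustified as written. Instead take $\FF'=\{\pi[C]:C\in\FF\}$ and observe that $\{c\in C:c\subseteq^*B_0\}=\bigcup_{F\in\fin}\below C{B_0\cup F}$ is a countable union of countably generated ideals.

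The ``underlying reason'' you give in Step 1 is also off. Absence of $\omega_2$-chains does not bound $\cont$: it holds in Cohen models with $\cont=\aleph_3$. The citation of \cite{Geschke} carries that step on its own.

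\textbf{The gap in Step 3.} The genuine gap is Step 3 when $\cont=\aleph_2$, which is the real content of the proposition. Your reduction is correct: it suffices to realize one cut $(L,M)$ exactly over a subalgebra $C$ of size $\aleph_1$. The proposed mechanism, however, cannot produce $Z$.

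\begin{itemize}
\item The $\sigma$-subalgebra property of $C$ only says that traces $\below C b$ of \emph{existing} elements $b$ are countably generated. It supplies no element with a prescribed trace.
\item A recursion handling ``countably many requirements at a time along an $\omega_1$-chain'' needs a limit at stage $\omega_1$. $\Pow(\omega)/\fin$ need not provide one, since Hausdorff's $(\omega_1,\omega_1)$-gaps exist in ZFC.
\end{itemize}

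What you actually need is a filter on the countable poset of finite approximations to $Z$ (with promises coming from the generators of $L$ and $M$). This filter must meet the $\aleph_1$ dense sets forcing $a\wedge -Z\neq0$ for each $a\in C$ not below a member of $L$, and $a\wedge Z\neq0$ for each $a\in C$ not below a member of $M$. That is, you need $\mathrm{cov}(\mathcal M)>\aleph_1$. This is not a ZFC consequence of $\cont=\aleph_2$, and nothing in your sketch extracts it from the hypothesis.

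The paper imports exactly this input, in three steps:
\begin{enumerate}
\item By Fuchino--Geschke--Soukup \cite{FGS} (see \cite[Lemma 6.1]{Geschke}), $\WFN(\Pow(\omega))$ with $\cont<\aleph_\omega$ yields $\MA(\mathrm{countable})$.
\item By \cite[Proposition 2.12]{DH}, this gives $\mathfrak m_c(\Pow(\omega)/\fin)=\cont$.
\item By \cite[Proposition 10]{AB}, that is precisely the posex extension property of Theorem~\ref{thm:AB}.
\end{enumerate}
Once $\MA(\mathrm{countable})$ is available, the $\sigma$-subalgebra property plays no role in realizing cuts.

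\textbf{The fallback.} Your fallback does not close the gap either. \cite[Proposition 5.4]{DH} concerns Cohen models, not arbitrary models of \eqref{eq:TSF}. There is no available ``converse'' that yields the Cohen--Parovi\v cenko property from tight $\sigma$-filteredness. Being Cohen--Parovi\v cenko already includes the ideal-completeness (extension) condition you are trying to establish, so appealing to it is circular. In the paper, the Cohen--Parovi\v cenko formulation of \eqref{eq:TSF} is derived \emph{from} this proposition (see the Remark following it), not used as an input.
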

\begin{proof}
By~\cite[Corollaries 5.4 and 5.5]{Geschke}, $\cont\leq\aleph_2$ and $\Pow(\omega)$ has WFN. In particular, $\cont$ is regular. Since $\fin$ is countable, WFN passes between $\Pow(\omega)$ and $\Pow(\omega)/\fin$; an algebra of size at most $\aleph_2$ with WFN is tightly $\sigma$-filtered. Thus $\Pow(\omega)/\fin$ is tightly $\sigma$-filtered, as also explained in the proof of~\cite[Theorem 6.3]{Geschke}.

The result of Fuchino, Geschke and Soukup~\cite{FGS} recorded in~\cite[Lemma 6.1]{Geschke} says that WFN of $\Pow(\omega)$, together with $\cont<\aleph_\omega$, implies $\MA(\mathrm{countable})$. Here this denotes Martin's axiom for countable partial orders and families of fewer than $\cont$ dense sets. By~\cite[Proposition 2.12]{DH}, this gives $\mathfrak m_c(\Pow(\omega)/\fin)=\cont$: every subalgebra of size less than $\cont$ is $\aleph_0$-ideal complete. By~\cite[Proposition 10]{AB}, this is exactly the extension property for posexes appearing in Theorem~\ref{thm:AB}. Since $|\Pow(\omega)/\fin|=\cont$, that theorem yields the required isomorphism.
\end{proof}

\begin{remark}
For completeness, we verify the Cohen--Parovi\v cenko formulation of~\eqref{eq:TSF} used in the introduction. As observed in~\cite[Section 2.5]{AB}, the algebra $\BB$ of Theorem~\ref{thm:AB} is Cohen--Parovi\v cenko in the sense of~\cite[Definition 5.2]{DH}. Proposition~\ref{prop:identification} therefore gives the forward implication. Conversely, suppose that $\cont\leq\aleph_2$ and $\Pow(\omega)/\fin$ is Cohen--Parovi\v cenko. Since $\cont$ is regular, $\BB$ exists, and uniqueness of Cohen--Parovi\v cenko algebras of size $\cont$ under this cardinal bound~\cite[Theorem 5.5]{DH} gives $\Pow(\omega)/\fin\cong\BB$. Thus $\Pow(\omega)/\fin$ is tightly $\sigma$-filtered. Lifting its filtration to $\Pow(\omega)$, as in the proof of~\cite[Theorem 11]{AB}, yields~\eqref{eq:TSF}.
\end{remark}

\section{\texorpdfstring{The BFPP of $C(\omega^*)$ under tight $\sigma$-filteredness}{The BFPP of C(omega*) under tight sigma-filteredness}}\label{sec:main}
We first prove an extension lemma for homomorphisms into algebras with the CSP.

\begin{proposition}\label{prop:extend}
Let $C\subseteq D$ be Boolean algebras, with $D$ a posex of $C$, and let $A$ satisfy the CSP. Every homomorphism $h\colon C\to A$ extends to a homomorphism $\bar h\colon D\to A$.
\end{proposition}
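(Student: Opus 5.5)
By Proposition~\ref{prop:posex}(a), $D=\gen{C\cup Q}$ for some countable $Q=\{q_0,q_1,\ldots\}\subseteq D$, and $C\leq_\sigma D$. The plan is to adjoin the elements of $Q$ one at a time. Set $D_n=\gen{C\cup\{q_0,\ldots,q_{n-1}\}}$ and extend $h$ successively to homomorphisms $h_n\colon D_n\to A$ with $h_{n+1}\restriction D_n=h_n$. The union of the $h_n$ is then a homomorphism on $\bigcup_n D_n=D$. To keep the induction going I need each $D_n$ to be a $\sigma$-subalgebra of $D$. Each $D_n$ is countably generated over $C$, so Proposition~\ref{prop:posex}(d) shows that $D$ is a posex of $D_n$, and part (a) then gives $D_n\leq_\sigma D$. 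Thus it suffices to handle a one-step extension: $E\leq_\sigma D$, $z\in D$, and $g\colon E\to A$ a homomorphism, and we want to extend $g$ to $\gen{E\cup\{z\}}$.

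For the one-step extension, recall the standard Sikorski criterion. A homomorphism $g\colon E\to A$ extends to $\gen{E\cup\{z\}}$ with $z\mapsto w$ precisely when $g(a)\le w$ for all $a\in\below E z$ and $g(b)\wedge w=0$ for all $b\in\below E{-z}$. Equivalently, $g(a)\le w\le -g(b)$ for all $a\in L:=\below E z$ and $b\in M:=\below E{-z}$. This is because elements of $\gen{E\cup\{z\}}$ have the form $(e_1\wedge z)\vee(e_2\wedge -z)$, and well-definedness reduces to these two conditions.

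The sets $L$ and $M$ are countably generated ideals of $E$, since $E\leq_\sigma D$. Choose countable generating sets $\{a_k\}$ of $L$ and $\{b_k\}$ of $M$. For any $a\in L$ and $b\in M$ we have $a\wedge b\le z\wedge -z=0$, so $g(a)\wedge g(b)=0$, that is, $g(a_k)\le -g(b_m)$ for all $k,m$. The CSP of $A$, applied to $X=\{g(a_k)\}$ and $Y=\{-g(b_m)\}$, produces $w\in A$ with $g(a_k)\le w\le -g(b_m)$ for all $k,m$. Every element of $L$ lies below a finite join of the $a_k$, so $g(a)\le w$ for all $a\in L$; similarly $w\le -g(b)$ for all $b\in M$. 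The criterion then yields the extension.

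The main point requiring care is that the one-step extension needs exactly the countable generation of $\below E z$ and $\below E{-z}$; for that reason the $\sigma$-subalgebra property must be preserved along the whole chain $D_n$. This is where Proposition~\ref{prop:posex}(d) is essential. Alternatively, one can bypass it: with $E=D_n$, the ideal $\below{D_n}{z}$ reduces to countably many ideals $\below C{\cdot}$ of elements of $D$, built from the atoms of the finite algebra generated by $q_0,\ldots,q_{n-1}$, and each of these is countably generated because $C\leq_\sigma D$.
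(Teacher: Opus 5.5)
Your proof is correct and follows the paper's argument essentially step for step. You adjoin the countably many generators one at a time, use Proposition~\ref{prop:posex}(d) so that each $D_n$ is a $\sigma$-subalgebra and the relevant cut is countably generated, separate with the CSP, and extend by the one-generator Sikorski criterion; the only cosmetic difference is that you describe the upper side of the cut through the ideal $\below E{-z}$ rather than the filter of elements of $D_n$ above the new generator, which is the same data after complementation.
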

\begin{proof}
Write $D=\gen{C\cup\{s_1,s_2,\ldots\}}$ and put $D_0=C$ and $D_{n+1}=\gen{D_n\cup\{s_{n+1}\}}$. It suffices to extend a homomorphism $h_n\colon D_n\to A$ to $D_{n+1}$ and take the union.

By Proposition~\ref{prop:posex}(d), $D$ is a posex of $D_n$, so the ideal
\[
I_n=\{d\in D_n:d\leq s_{n+1}\}
\]
and the filter $F_n=\{d\in D_n:d\geq s_{n+1}\}$ have countable cofinal and coinitial families, respectively. Choose such families $(a_k)_{k<\omega}$ and $(b_k)_{k<\omega}$. Since $a_k\leq s_{n+1}\leq b_j$, we have $h_n(a_k)\leq h_n(b_j)$ for all $k,j$. The CSP provides $c\in A$ with
\[
h_n(a_k)\leq c\leq h_n(b_j)\qquad(k,j<\omega).
\]
Consequently, $d\leq s_{n+1}$ implies $h_n(d)\leq c$, and $d\geq s_{n+1}$ implies $h_n(d)\geq c$. The one-generator form of Sikorski's extension theorem therefore extends $h_n$ to $D_{n+1}$ by sending $s_{n+1}$ to $c$.
\end{proof}

The next lemma replaces the weight bound in Theorem~\ref{thm:CH} by tight $\sigma$-filteredness. Its ambient algebra is $\BB$, which exists whenever $\cont$ is regular.

\begin{lemma}\label{lem:retract}
Assume that $\cont$ is regular. Every tightly $\sigma$-filtered Boolean algebra $A$ with the CSP and $|A|\leq\cont$ is a Boolean retract of $\BB$.
\end{lemma}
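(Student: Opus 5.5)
We need an embedding $e\colon A\to\BB$ and a homomorphism $r\colon\BB\to A$ with $r\circ e=\id_A$. The embedding comes from Theorem~\ref{thm:AB}: since $A$ is tightly $\sigma$-filtered and $|A|\leq\cont$, there is an embedding $e\colon A\to\BB$. The retraction will be built by extending $e^{-1}\colon e[A]\to A$ along a filtration of $\BB$ over $e[A]$, using Proposition~\ref{prop:extend} at successor steps (this is where the CSP of $A$ is used) and unions at limit steps. So the whole argument reduces to the following claim.

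\emph{Claim.} The embedding $e$ can be chosen so that $\BB$ is tightly $\sigma$-filtered over $e[A]$.

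Granting the claim, fix a continuous chain $(B_\alpha)_{\alpha\leq\lambda}$ with $B_0=e[A]$, $B_\lambda=\BB$, and each $B_\alpha\to B_{\alpha+1}$ a posex. Define homomorphisms $r_\alpha\colon B_\alpha\to A$ by transfinite recursion, each extending the previous ones:
\begin{itemize}
\item $r_0=e^{-1}$;
\item $r_{\alpha+1}$ is an extension of $r_\alpha$ given by Proposition~\ref{prop:extend};
\item at limit stages, $r_\alpha$ is the union of the earlier maps, which is a homomorphism because the chain is continuous.
\end{itemize}
Then $r=r_\lambda$ satisfies $r\circ e=\id_A$.

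To prove the claim, I would not use an arbitrary embedding but build $\BB$ around $A$ with an additive $\sigma$-skeleton $\FF$ of $A$ (Proposition~\ref{prop:posex}(g)). Enumerate a filtration $(A_\alpha)_{\alpha\leq\lambda}$ of $A$ by posexes with each $|A_\alpha|<\cont$. This is possible since $|A|\leq\cont$, $\cont$ is regular, and condition (c) of the skeleton lets us take members of $\FF$ of small size, so the chain can be refined into countably generated posex steps via Proposition~\ref{prop:posex}(h),(a).

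In parallel, I would run the Avil\'es--Brech construction of $\BB$ as a $\cont$-length chain $(D_\xi)_{\xi<\cont}$ of algebras of size $<\cont$, where each step is a posex. At every stage, one either realizes a bookkeeping instance of the extension property from Theorem~\ref{thm:AB} or adjoins the next piece of $A$, which is possible because $A_{\alpha+1}$ is a posex of $A_\alpha$. To adjoin $A_{\alpha+1}$ I would take the push-out of $D_\xi\leftarrow A_\alpha\to A_{\alpha+1}$. The key fact needed is that a push-out of a posex is a posex. Moreover, each $D_\xi\cap A$ should be a $\sigma$-subalgebra such that $D_{\xi+1}$ is a posex of $D_\xi$ and $A$ stays independent in the amalgamated-free-product sense.

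The resulting algebra $D=\bigcup_\xi D_\xi$ is tightly $\sigma$-filtered, has size $\cont$, and has the extension property, so $D\cong\BB$ by the uniqueness in Theorem~\ref{thm:AB}. It contains $A$, and $D$ is tightly $\sigma$-filtered over $A$ via the chain $\gen{A\cup D_\xi}$, whose successor steps are push-outs of the steps $D_\xi\to D_{\xi+1}$ along $A$.

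The main obstacle is this last verification: that $\gen{A\cup D_\xi}\to\gen{A\cup D_{\xi+1}}$ is a posex. I would try to prove it via Proposition~\ref{prop:posex}(a). That means showing the step is countably generated, which is clear, and showing that $\gen{A\cup D_\xi}$ is a $\sigma$-subalgebra of $\gen{A\cup D_{\xi+1}}$. For the latter, I would use that the construction makes $\gen{A\cup D_\xi}$ the push-out of $A\leftarrow A\cap D_\xi\to D_\xi$, and that this push-out is compatible with the push-out defining $D_{\xi+1}$.

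A fallback that avoids the construction is to show directly that for any embedding $A\to\BB$ the image lies in a $\sigma$-subalgebra chain. This would go by intersecting an additive $\sigma$-skeleton of $\BB$ with one for $A$ through a closure (Löwenheim--Skolem) argument, and then invoking Proposition~\ref{prop:posex}(f) at limits of cofinality $\omega$.
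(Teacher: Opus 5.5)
Your reduction to the Claim and your retraction step ($r_0=e^{-1}$, Proposition~\ref{prop:extend} at successors, unions at limits) are exactly what the paper does. The gap is the Claim itself. It carries all the remaining content of the lemma, and you do not prove it. You call the verification that $\gen{A\cup D_\xi}\to\gen{A\cup D_{\xi+1}}$ is a posex ``the main obstacle'' and only indicate how you would attempt it. The auxiliary requirements you list (that $D_\xi\cap A$ be a $\sigma$-subalgebra, that $A$ ``stays independent'') are never established.

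Your fallback cannot work as stated, because the Claim fails for arbitrary embeddings. By Lemma~\ref{lem:stages}, if $\BB$ is tightly $\sigma$-filtered over $e[A]$, then $e[A]\leq_\sigma\BB$, and this can fail. For instance, assume \axiomTSF{} and split $\omega_1=E\cup O$ into two uncountable sets. Compose $x\mapsto(x\cap E,x\cap O)$, from $\Cc(\omega_1)$ into $\Cc(E)\times\Cc(O)$, with an embedding of this tightly $\sigma$-filtered product of size $\cont$ into $\BB$. The elements of the image below the image of $(E,\varnothing)$ are the images of countable subsets of $E$, and no countable family of them is cofinal. So the embedding really has to be constructed.

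The idea you are missing is that the stages need not have size $<\cont$, so no interleaving is needed. The paper runs the Avil\'es--Brech iteration starting from $B_0=A$ instead of $\{0,1\}$. Each $B_{\beta+1}$ is the push-out of $B_\beta$ and a countable $S$ over a countable $R\subseteq B_\beta$. The bookkeeping treats every such pair with $R\subseteq B_\alpha$ at some later stage, which is possible because $|B_\alpha|\leq\cont=\cont^{\aleph_0}$. Then:
\begin{enumerate}
\item $B_\cont$ is tightly $\sigma$-filtered over $A$ by construction, hence tightly $\sigma$-filtered by Proposition~\ref{prop:concatenate};
\item regularity of $\cont$ puts the image of any $C$ with $|C|<\cont$ into some $B_\alpha$, so the bookkeeping gives the extension property;
\item uniqueness in Theorem~\ref{thm:AB} gives $B_\cont\cong\BB$.
\end{enumerate}

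Your interleaved construction can also be completed, but it needs an actual argument. First show by induction that $\gen{A\cup D_\xi}$ is the push-out of $D_\xi$ and $A$ over $A\cap D_\xi$, the part $A_\alpha$ of $A$ adjoined so far. This uses the fact that, for $A_\alpha\subseteq F\subseteq D_\xi$, the subalgebra generated by $F\cup A_{\alpha+1}$ inside the push-out of $D_\xi$ and $A_{\alpha+1}$ over $A_\alpha$ is the push-out of $F$ and $A_{\alpha+1}$ over $A_\alpha$. Then consider a bookkeeping step with $R\subseteq D_\xi$. Both $\gen{A\cup D_{\xi+1}}$ and the push-out of $\gen{A\cup D_\xi}$ and $S$ over $R$ are colimits of the same diagram $S\leftarrow R\rightarrow D_\xi\leftarrow A_\alpha\rightarrow A$, so that step is a posex. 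Steps that adjoin pieces of $A$ leave the chain unchanged.
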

\begin{proof}
We use the push-out construction of~\cite{AB}, starting with $B_0=A$ instead of $\{0,1\}$. Construct a continuous chain $(B_\alpha)_{\alpha\leq\cont}$ in which each successor is the push-out of $B_\alpha$ and a countable algebra $S$ over a countable subalgebra $R\subseteq B_\alpha$. The bookkeeping arranges that every pair $R\subseteq S$, with $R\subseteq B_\alpha$, is treated at a later stage. This is possible because $|B_\alpha|\leq\cont$ and $\cont^{\aleph_0}=\cont$. More explicitly, partition $\cont$ into sets $\Phi_\alpha$ of size $\cont$, with $\min\Phi_\alpha\geq\alpha$, and use the stages in $\Phi_\alpha$ to enumerate the countable extensions over countable subalgebras of $B_\alpha$. These extensions may be chosen to ensure that the final algebra $B'=B_\cont$ has size $\cont$.

Every successor step is a posex, so $B'$ is tightly $\sigma$-filtered over $A$. By Proposition~\ref{prop:concatenate}, it is tightly $\sigma$-filtered. Given a posex $C\to D$ with $|C|<\cont$ and an embedding $C\to B'$, regularity of $\cont$ places the image of $C$ inside some $B_\alpha$. At a later stage the bookkeeping adds the countable push-out data defining $C\to D$, and hence extends the embedding to $D$. Thus $B'$ satisfies Theorem~\ref{thm:AB}, and $B'\cong\BB$.

To retract $B'$ onto $A$, recursively define coherent homomorphisms $r_\alpha\colon B_\alpha\to A$. Start with $r_0=\id_A$. Proposition~\ref{prop:extend} extends $r_\alpha$ across each successor posex, and at limit stages take unions. The resulting $r_\cont\colon B'\to A$ restricts to the identity on $A$.
\end{proof}

\begin{lemma}\label{lem:CcCSP}
The algebra $\Cc(\omega_1)$ satisfies the CSP.
\end{lemma}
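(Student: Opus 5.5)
We verify the CSP directly from the structure of $\Cc(\omega_1)$. Its elements are the countable subsets of $\omega_1$ and their complements. Let $X,Y\subseteq\Cc(\omega_1)$ be countable with $x\subseteq y$ for all $x\in X$, $y\in Y$. The plan is to take $z$ to be either $\bigcup X$ or $\bigcap Y$, depending on a case split. Both sets are natural candidates for a separator, but each may fail to lie in $\Cc(\omega_1)$ or may fail the required inequalities.

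\textbf{Case 1: every element of $X$ is countable.} Put $z=\bigcup X$. As a countable union of countable sets, $z$ is countable, so $z\in\Cc(\omega_1)$. Each $y\in Y$ contains every $x\in X$, so $x\subseteq z\subseteq y$ for all $x\in X$ and $y\in Y$. (If $X=\emptyset$, then $z=\emptyset$ works.)

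\textbf{Case 2: some $x_0\in X$ is cocountable.} Then every $y\in Y$ satisfies $y\supseteq x_0$, so every $y\in Y$ is cocountable. Put $z=\bigcap Y$, with the convention $z=\omega_1$ if $Y=\emptyset$. Its complement $\bigcup_{y\in Y}(\omega_1\setminus y)$ is a countable union of countable sets, hence countable. So $z$ is cocountable and belongs to $\Cc(\omega_1)$. Every $x\in X$ is contained in each $y\in Y$, hence in $z$, and clearly $z\subseteq y$ for each $y\in Y$.

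There is no real obstacle. The only point needing care is the case split: $\bigcup X$ may be uncountable but not cocountable, and $\bigcap Y$ may fail to be cocountable. The dichotomy on $X$ ensures that in each case the chosen set lies in $\Cc(\omega_1)$, because $\omega_1$ has uncountable cofinality, so countable unions of countable subsets of $\omega_1$ are countable.
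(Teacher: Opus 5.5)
Your proof is correct and is essentially the paper's argument, which takes $z=\bigcup X$ in every case. The case split is unnecessary, and your closing remark that $\bigcup X$ may be uncountable but not cocountable is mistaken: if some $x_0\in X$ is cocountable, then $\bigcup X\supseteq x_0$ is cocountable, so $\Cc(\omega_1)$ is closed under countable unions and $\bigcup X$ already separates $X$ from $Y$ in your Case~2 as well.
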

\begin{proof}
It is closed under countable unions: a countable union of countable sets is countable, and a union containing a cocountable set is cocountable. Given countable $X,Y\subseteq\Cc(\omega_1)$ with $x\subseteq y$ for all $x\in X$, $y\in Y$, the set $z=\bigcup X$ belongs to $\Cc(\omega_1)$ and separates $X$ and $Y$.
\end{proof}

We next analyze tight $\sigma$-filteredness of $\Cc(\omega_1)$.

\begin{lemma}\label{lem:stages}
If $(B_\xi)_{\xi\leq\lambda}$ is a tight $\sigma$-filtration of $A$, then $B_\xi\leq_\sigma A$ for every $\xi\leq\lambda$.
\end{lemma}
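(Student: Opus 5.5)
Fix $\xi\leq\lambda$ and $b\in A$; we must show that $\below{B_\xi}b$ is countably generated. The natural route is transfinite induction on $\eta\in[\xi,\lambda]$, proving the stronger statement $B_\xi\leq_\sigma B_\eta$ for every $\eta\geq\xi$. The case $\eta=\lambda$ is the lemma, since $B_\lambda=A$.

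For the base case $\eta=\xi$, $\below{B_\xi}b$ is the principal ideal generated by $b$ itself. For a successor $\eta+1$, $B_{\eta+1}$ is a posex of $B_\eta$, so $B_\eta\leq_\sigma B_{\eta+1}$ by Proposition~\ref{prop:posex}(a). Then we use transitivity of $\leq_\sigma$. Given $b\in B_{\eta+1}$, choose a countable family $(c_n)$ cofinal in $\below{B_\eta}b$. By the induction hypothesis, each $\below{B_\xi}{c_n}$ has a countable cofinal family $(a_{n,k})_k$. Every $a\in B_\xi$ with $a\leq b$ lies in $B_\eta$, so $a\leq c_n$ for some $n$, and then $a\leq a_{n,k}$ for some $k$. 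Hence $\{a_{n,k}\}$ is a countable cofinal subset of $\below{B_\xi}b$. Here ``countably generated'' for an ideal means having a countable cofinal subset, i.e.\ being generated by countably many elements. That is the reading used in Proposition~\ref{prop:extend}.

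The limit case is the main obstacle. Let $\eta$ be a limit, $b\in B_\eta$. By continuity of the chain, $b\in B_\zeta$ for some $\zeta<\eta$, and by induction $B_\xi\leq_\sigma B_\zeta$, which handles $b$ directly. So in fact the limit case is easy, because elements appear at some earlier stage, and the induction only ever needs $b$ in the current top algebra.

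The real difficulty is thus hidden: the induction handles $b\in B_\eta$ for every $\eta$ and closes up at $\eta=\lambda$, so no genuine obstacle remains beyond checking the transitivity argument at successors. I would double-check only that the definition of $\leq_\sigma$ quantifies over $b$ in the larger algebra, which it does, and that $B_\xi$ is trivially a $\sigma$-subalgebra of itself.
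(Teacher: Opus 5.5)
Your proof is correct and is essentially the paper's argument: induction on $\eta\in[\xi,\lambda]$ that $\below{B_\xi}x$ is countably generated for every $x\in B_\eta$. The base case is the principal ideal, the successor step composes countable cofinal families via Proposition~\ref{prop:posex}(a), and the limit step uses continuity to place $x$ at an earlier stage.
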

\begin{proof}
Fix $\xi\leq\lambda$. We prove by induction on $\eta\in[\xi,\lambda]$ that $\below{B_\xi}x$ is countably generated for every $x\in B_\eta$. For $\eta=\xi$ the ideal is generated by $x$.

If $x\in B_{\eta+1}$, Proposition~\ref{prop:posex}(a) gives a countable cofinal set $G\subseteq\below{B_\eta}x$. For every $g\in G$, the induction hypothesis gives a countable cofinal $H_g\subseteq\below{B_\xi}g$. The countable set $\bigcup_{g\in G}H_g$ is cofinal in $\below{B_\xi}x$: if $c\in B_\xi$ and $c\leq x$, choose $g\in G$ with $c\leq g$, and then $h\in H_g$ with $c\leq h$.

At a limit stage, continuity places $x$ in an earlier algebra, to which the induction hypothesis applies.
\end{proof}

\begin{proposition}\label{prop:relativetight}
If $A$ is tightly $\sigma$-filtered and $b\in A$, then the relative Boolean algebra $\below A b$ is tightly $\sigma$-filtered.
\end{proposition}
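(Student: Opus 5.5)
Our plan is to use the additive $\sigma$-skeleton characterization, Proposition~\ref{prop:posex}(g), rather than manipulate filtrations directly. Fix an additive $\sigma$-skeleton $\FF$ of $A$. Set
\[
\FF_b=\{\below C b : C\in\FF,\ b\in C\}.
\]
We will show that $\FF_b$ is an additive $\sigma$-skeleton of $\below A b$. Each $\below C b$ with $b\in C$ is a subalgebra of $\below A b$, with unit $b$ and relative complement $x\mapsto b\wedge -x$.

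We check the four clauses of Definition~\ref{def:skeleton}.

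For (a), we need the trivial algebra $\{0,b\}$ to lie in $\FF_b$. It should be replaced by the smallest member of $\FF$ containing $b$. To avoid this issue, we will instead work with the skeleton $\FF'=\{C\in\FF: b\in C\}\cup\{\{0,1\}\}$. Alternatively, we observe that the relevant proof of (g) in~\cite{Geschke} tolerates the bottom element being any countable algebra; this is harmless because countable algebras are posexes of $\{0,1\}$ by Proposition~\ref{prop:posex}(c). Concretely, fix $C_0\in\FF$ countable with $b\in C_0$, which exists by clause (c) applied to $\gen{b}$ enlarged to a countable infinite algebra. Then $\below{C_0}b$ is countable. In the end we only need a filtration of $\below A b$ starting at a countable algebra, which is fine by Proposition~\ref{prop:concatenate}.

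For (b), given $C_i\in\FF$ containing $b$, we have $\gen{\bigcup_i C_i}\in\FF$. The key identity is
\[
\below{\gen{\bigcup_i C_i}}b=\gen{\textstyle\bigcup_i \below{C_i}b}_b,
\]
where the right-hand side denotes generation inside $\below A b$. This holds because every element of the left-hand side has the form $b\wedge t$ with $t$ a Boolean term in the $C_i$, and $b\wedge$ distributes over the term using relativized operations.

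For (c), take an infinite $X\subseteq\below A b$. The algebra $\gen{X\cup\{b\}}$ has the same size as $X$, so by clause (c) of Definition~\ref{def:skeleton} it is contained in some $C\in\FF$ of that size, and then $X\subseteq\below C b$.

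For (d), let $C\in\FF$ contain $b$, and let $y\le b$ with $y\in A$. Then $\below{(\below C b)}{y}=\below C y$, which is countably generated because $C\leq_\sigma A$.

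The main obstacle is (a), that is, making the bottom of the skeleton trivial. We would handle it by building the filtration explicitly instead of quoting (g) verbatim. Well-order $\below A b$, close under $\FF_b$-unions as in Geschke's argument, and obtain a continuous chain in $\FF_b$ starting at the countable algebra $\below{C_0}b$ whose successor steps are countably generated. By (d) each member is a $\sigma$-subalgebra, so Proposition~\ref{prop:posex}(b) makes every successor step a posex.

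This chain gives that $\below A b$ is tightly $\sigma$-filtered over $\below{C_0}b$. Since $\below{C_0}b$ is countable, it is a posex of $\{0,b\}$ by Proposition~\ref{prop:posex}(c). Proposition~\ref{prop:concatenate} then finishes the proof.
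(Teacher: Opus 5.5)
Your proof is correct, but it rests on a different key input than the paper's.

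The paper never passes to skeletons. It takes an arbitrary tight $\sigma$-filtration $(B_\xi)_{\xi\le\lambda}$ of $A$ and pushes it forward along the homomorphism $\pi_b(x)=x\wedge b$. The chain $\pi_b[B_\xi]$ then starts at $\{0,b\}$ automatically, and its successor steps stay countably generated. The stages $B_\xi$ need not contain $b$, so the $\sigma$-subalgebra property has to come from elsewhere. The paper gets it from the adjunction $\pi_b(x)\le y\iff x\le y\vee -b$ (for $y\le b$), combined with Lemma~\ref{lem:stages}: every stage of a filtration is a $\sigma$-subalgebra of the whole algebra.

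You instead use Proposition~\ref{prop:posex}(g) to obtain a skeleton $\FF$ of $A$, and you relativize only those $C\in\FF$ that contain $b$.
\begin{itemize}
\item This reduces clause (d) to the one-line identity $\below{(\below C b)}{y}=\below C y$.
\item It also gives a slightly stronger structural statement: skeletons relativize to principal ideals.
\item The price is reliance on Geschke's characterization, a much deeper result than Lemma~\ref{lem:stages}, together with the need to handle the bottom element.
\end{itemize}

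The bottom element is not a genuine obstacle. $\FF_b\cup\{\{0,b\}\}$ is already an additive $\sigma$-skeleton of $\below A b$. Indeed, $\{0,b\}$ is contained in every subalgebra of $\below A b$, so adjoining it does not disturb clause (b), and it is trivially a $\sigma$-subalgebra.

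Your alternative also works: an explicit chain over $\below{C_0}b$ followed by Proposition~\ref{prop:concatenate}. Build it as in the proof of Lemma~\ref{lem:relative}, using your identity from (b) to see that successor steps are countably generated. Two small repairs are needed. Drop the appeal to ``the smallest member of $\FF$ containing $b$'', which need not exist. Also dispose of finite $A$ separately, since then no countable infinite algebra containing $b$ is available for clause (c).
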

\begin{proof}
The case $b=0$ is immediate. Let $(B_\xi)_{\xi\leq\lambda}$ be a tight $\sigma$-filtration of $A$, let $\pi_b\colon A\to\below A b$ be given by $\pi_b(x)=x\wedge b$, and put $D_\xi=\pi_b[B_\xi]$. This is an increasing continuous chain from $\{0,b\}$ to $\below A b$. If $B_{\xi+1}=\gen{B_\xi\cup Q}$ with $Q$ countable, then
\[
D_{\xi+1}=\gen{D_\xi\cup\pi_b[Q]}.
\]
It remains to prove $D_\xi\leq_\sigma D_{\xi+1}$. In fact, $D_\xi\leq_\sigma\below A b$.

For $x\in A$ and $y\leq b$, Boolean algebra identities give
\[
\pi_b(x)\leq y\quad\Longleftrightarrow\quad x\leq y\vee -b.
\]
By Lemma~\ref{lem:stages}, choose a countable cofinal set $G\subseteq\below{B_\xi}{y\vee -b}$. Its image $\pi_b[G]$ is cofinal in $\below{D_\xi}y$: if $e=\pi_b(x)\leq y$ with $x\in B_\xi$, choose $g\in G$ above $x$, and then $e\leq\pi_b(g)$. Proposition~\ref{prop:posex}(a) completes the proof.
\end{proof}

\begin{corollary}\label{cor:largec}
If $\cont\geq\aleph_3$, then $\Cc(\omega_1)$ is not tightly $\sigma$-filtered.
\end{corollary}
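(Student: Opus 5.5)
The plan is to argue by contradiction: a tight $\sigma$-filtration of $\Cc(\omega_1)$ would pass down to a principal ideal isomorphic to $\Pow(\omega)$, and Proposition~\ref{prop:identification} would then bound the continuum. So suppose $\Cc(\omega_1)$ is tightly $\sigma$-filtered, and take $b=\omega\in\Cc(\omega_1)$, the countable set of finite ordinals.

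First I identify the relative algebra $\below{\Cc(\omega_1)}{\omega}$. Its elements are the members of $\Cc(\omega_1)$ contained in $\omega$. Every subset of $\omega$ is countable, hence lies in $\Cc(\omega_1)$. Conversely, every element below $\omega$ is a subset of $\omega$. So this relative algebra is exactly $\Pow(\omega)$: its unit is $\omega$, the Boolean operations are union and intersection, and complementation is relative to $\omega$. In particular $\below{\Cc(\omega_1)}{\omega}\cong\Pow(\omega)$ as Boolean algebras.

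Next, Proposition~\ref{prop:relativetight}, applied with $A=\Cc(\omega_1)$ and $b=\omega$, shows that $\Pow(\omega)$ is tightly $\sigma$-filtered. Tight $\sigma$-filteredness is clearly preserved under isomorphism. Proposition~\ref{prop:identification} then gives $\cont\in\{\aleph_1,\aleph_2\}$, which contradicts $\cont\geq\aleph_3$.

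There is no real obstacle here. The only point that needs checking is that the principal ideal below $\omega$ is the whole of $\Pow(\omega)$, and this holds because all countable sets belong to $\Cc(\omega_1)$. The substantive work, namely Geschke's bound $\cont\leq\aleph_2$, is already packaged in Proposition~\ref{prop:identification}.
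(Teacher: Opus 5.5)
Your proposal is correct and follows essentially the same route as the paper: pass to the relative algebra $\below{\Cc(\omega_1)}{\omega}=\Pow(\omega)$ via Proposition~\ref{prop:relativetight}. The paper then cites Geschke's bound directly, namely that no complete Boolean algebra of size greater than $\aleph_2$ is tightly $\sigma$-filtered. You instead invoke Proposition~\ref{prop:identification}, whose first step is exactly that bound, so there is no circularity.
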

\begin{proof}
If $\Cc(\omega_1)$ were tightly $\sigma$-filtered, Proposition~\ref{prop:relativetight} would give tight $\sigma$-filteredness of
\[
\below{\Cc(\omega_1)}\omega=\Pow(\omega).
\]
This contradicts Geschke's theorem that no complete Boolean algebra of size greater than $\aleph_2$ is tightly $\sigma$-filtered~\cite[Corollary 5.4]{Geschke}.
\end{proof}

The converse direction requires the following relative filtration lemma.

\begin{lemma}\label{lem:relative}
Let $B$ be tightly $\sigma$-filtered, and let $e_0\leq e_1\leq\cdots$ be elements of $B$. Then $B$ is tightly $\sigma$-filtered over
\[
A:=\gen{\bigcup_{n<\omega}\below B{e_n}}.
\]
\end{lemma}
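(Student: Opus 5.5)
We plan to use Geschke's characterization, Proposition~\ref{prop:posex}(g), and fix an additive $\sigma$-skeleton $\FF$ of $B$. The natural candidate filtration over $A$ is obtained from members of $\FF$: enumerate $B=\{b_\xi:\xi<\kappa\}$ and build a continuous increasing chain $(C_\xi)_{\xi\leq\kappa}$ of members of $\FF$ with $C_0\in\FF$ containing $\{e_n:n<\omega\}$ countable (by item (c) of Definition~\ref{def:skeleton}). Each $C_{\xi+1}\in\FF$ contains $C_\xi\cup\{b_\xi\}$ and is countably generated over $C_\xi$; this uses Proposition~\ref{prop:posex}(h). At limits we take unions, which lie in $\FF$ by (b) of Definition~\ref{def:skeleton}. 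Then we set $B_\xi:=\gen{A\cup C_\xi}$. This chain is increasing and continuous, and $B_\kappa=B$. Each successor $B_{\xi+1}$ is countably generated over $B_\xi$. By Proposition~\ref{prop:posex}(a) it remains to show $B_\xi\leq_\sigma B_{\xi+1}$, and it suffices to show $B_\xi\leq_\sigma B$. We also need the starting point $B_0$ to be reachable from $A$. Since $C_0$ is countable, $B_0$ is countably generated over $A$, so $B_0$ is a posex of $A$ once $A\leq_\sigma B$. Thus everything reduces to the claim that $\gen{A\cup C}\leq_\sigma B$ for every $C\in\FF$ containing all $e_n$; the case $C=\{0,1\}$ covers $A$ itself.

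To prove the claim, we write $A_n:=\gen{\below B{e_n}\cup C}$. Then $\gen{A\cup C}=\bigcup_n A_n$ is an increasing union, so Proposition~\ref{prop:posex}(f) reduces the claim to $A_n\leq_\sigma B$ for each $n$. We describe $A_n$ concretely. Every element of $A_n$ has the form $u\vee(c\wedge -e_n)$ with $u\leq e_n$ arbitrary in $B$ and $c\in C$. The reason is that $\below B{e_n}$ is an ideal, and for $c\in C$ the pieces $c\wedge e_n$ already lie in $\below B{e_n}$. Indeed, this set of elements contains the generators and is closed under joins and complements relative to the splitting $e_n,-e_n$.

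Now fix $b\in B$; we need a countable cofinal subset of $\below{A_n}b$. An element $u\vee(c\wedge-e_n)$ is $\leq b$ iff $u\leq b\wedge e_n$ and $c\wedge -e_n\leq b$. The second condition is equivalent to $c\leq b\vee e_n$. Since $C\leq_\sigma B$, there is a countable cofinal $G\subseteq\below C{b\vee e_n}$. We claim that the countable family $\{(b\wedge e_n)\vee(g\wedge-e_n):g\in G\}$ lies in $\below{A_n}b$ and is cofinal there. This is a direct check, and it is where $e_n\in C$ and the splitting are used.

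The main obstacle is justifying the normal form for $A_n$, i.e.\ that the displayed set is a subalgebra. We would verify closure under complement directly: $-(u\vee(c\wedge-e_n))=(e_n\wedge -u)\vee(-c\wedge -e_n)$, and $e_n\wedge-u\leq e_n$. Closure under joins is immediate. A secondary point is making the bookkeeping honest: the chain over $A$ consists of $A\subseteq B_0\subseteq B_1\subseteq\cdots$. The first step $A\to B_0$ is a single posex, or a finite chain of them via Proposition~\ref{prop:posex}(d). Continuity at limits follows from continuity of $(C_\xi)$.
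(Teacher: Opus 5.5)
Your proof is correct and follows the paper's route: the same skeleton chain $(C_\xi)$, the same algebras $\gen{A\cup C_\xi}$, the same reduction via Proposition~\ref{prop:posex}(f) to $\gen{\below B{e_n}\cup C}$, and the same kind of cofinal family $\{(b\wedge e_n)\vee(\cdot)\}$. One correction: your normal form $u\vee(c\wedge -e_n)$ with $c\in C$, and your cofinal set taken in $\below C{b\vee e_n}$, never use $e_n\in C$ (unlike the paper's form with $y\in\below S{-e_n}$), so the claim holds for every $C\in\FF$ and your application to $C=\{0,1\}$ is legitimate (you could even take $C_0=\{0,1\}$, so that $B_0=A$); drop the hypothesis ``containing all $e_n$'' and the remark that it is used, since as written they contradict that application.
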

\begin{proof}
If $B$ is countable, Proposition~\ref{prop:posex}(c) applies. Otherwise, choose an additive $\sigma$-skeleton $\FF$ of $B$ and enumerate $B=\{b_\xi:\xi<\kappa\}$. By Definition~\ref{def:skeleton}, choose a continuous increasing chain $(S_\xi)_{\xi\leq\kappa}$ in $\FF$ such that $S_0$ is countable and contains all $e_n$, and
\[
S_{\xi+1}=\gen{S_\xi\cup T_\xi},
\]
where $T_\xi\in\FF$ is countable and contains $b_\xi$. Then $S_\kappa=B$. Put $C_\xi=\gen{A\cup S_\xi}$. The chain $(C_\xi)$ is continuous, $C_0$ is countably generated over $A$, and $C_{\xi+1}$ is countably generated over $C_\xi$. We show that $A$ and every $C_\xi$ are $\sigma$-subalgebras of $B$.

First,
\[
\gen{\below B{e_n}}=\below B{e_n}\ \cup\ \{-x:x\in\below B{e_n}\}.
\]
For $b\in B$, the ideal $\below{\gen{\below B{e_n}}}b$ is principal: its generator is $b$ if $-e_n\leq b$, and $b\wedge e_n$ otherwise. Thus $\gen{\below B{e_n}}\leq_\sigma B$. These subalgebras form an increasing sequence whose union is $A$, so Proposition~\ref{prop:posex}(f) gives $A\leq_\sigma B$.

Now let $S\in\FF$ contain every $e_n$ and put $D_n=\gen{\below B{e_n}\cup S}$. Since $e_n\in S$, one has
\[
D_n=\{x\vee y:x\in\below B{e_n},\ y\in\below S{-e_n}\}.
\]
Indeed, the family on the right contains the generators and is closed under joins and complements; the relative decomposition of the complement is
\[
-(x\vee y)=(e_n\wedge -x)\vee(-e_n\wedge -y).
\]
Given $b\in B$, use $S\leq_\sigma B$ to choose a countable cofinal $Y\subseteq\below S{b\wedge -e_n}$. Then
\[
\{(b\wedge e_n)\vee y:y\in Y\}
\]
is cofinal in $\below{D_n}b$. Consequently $D_n\leq_\sigma B$, and another application of Proposition~\ref{prop:posex}(f) gives
\[
\gen{A\cup S}=\bigcup_{n<\omega}D_n\leq_\sigma B.
\]
Taking $S=S_\xi$ proves the claim for $C_\xi$. Proposition~\ref{prop:posex}(a) now shows that $A\subseteq C_0\subseteq\cdots\subseteq C_\kappa=B$ is the required filtration.
\end{proof}

\begin{theorem}\label{thm:equivalence}
$\Cc(\omega_1)$ is tightly $\sigma$-filtered if and only if $\Pow(\omega)$ is tightly $\sigma$-filtered.
\end{theorem}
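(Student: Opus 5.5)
The forward direction follows from Proposition~\ref{prop:relativetight} exactly as in the proof of Corollary~\ref{cor:largec}. If $\Cc(\omega_1)$ is tightly $\sigma$-filtered, then so is the relative algebra $\below{\Cc(\omega_1)}\omega$. Since every subset of $\omega$ is countable, this relative algebra is $\Pow(\omega)$.

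For the converse, assume $\Pow(\omega)$ is tightly $\sigma$-filtered. The plan is to build $\Cc(\omega_1)$ as the union of a continuous chain indexed by $\omega_1$, in which each stage is tightly $\sigma$-filtered over the union of the previous ones; Proposition~\ref{prop:concatenate} then finishes.

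\emph{Choice of chain.} For $\alpha\le\omega_1$ let
\[
B_\alpha=\{x\in\Cc(\omega_1): x\subseteq\alpha \text{ or } \omega_1\setminus\alpha\subseteq x\},
\]
with $B_0=\{0,1\}$. Then $B_{\omega_1}=\Cc(\omega_1)$, and for limit $\alpha$ the union $\bigcup_{\beta<\alpha}B_\beta$ is a proper subalgebra of $B_\alpha$. So rather than continuity, we need for every $0<\alpha<\omega_1$ that $B_\alpha$ is tightly $\sigma$-filtered over $\bigcup_{\beta<\alpha}B_\beta$, which is exactly the hypothesis of Proposition~\ref{prop:concatenate}.

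\emph{Each stage.} Fix $\alpha$ with $0<\alpha<\omega_1$. Then $B_\alpha$ is isomorphic to $\Pow(\alpha\cup\{\infty\})$, where $\infty$ codes the tail $\omega_1\setminus\alpha$; since $\alpha$ is countable, this is a copy of $\Pow(\omega)$ and hence tightly $\sigma$-filtered. We distinguish two cases.
\begin{enumerate}
\item $\alpha=\beta+1$. Here $\bigcup_{\gamma<\alpha}B_\gamma=B_\beta$, and $B_\alpha=\gen{B_\beta\cup\{\{\beta\}\}}$. Thus $B_\alpha$ is countably generated over $B_\beta$. Moreover $B_\beta\leq_\sigma B_\alpha$, because every cut of an element of $B_\alpha$ in $B_\beta$ is principal. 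By Proposition~\ref{prop:posex}(a), $B_\alpha$ is a posex of $B_\beta$.
\item $\alpha$ limit. Choose $\beta_n\nearrow\alpha$ and put $e_n=\beta_n\in B_\alpha$. Then $\below{B_\alpha}{e_n}=\Pow(\beta_n)$, and
\[
A=\gen{\bigcup_n\below{B_\alpha}{e_n}}=\bigcup_{\beta<\alpha}B_\beta\cap B_\alpha=\bigcup_{\beta<\alpha}B_\beta .
\]
This holds because every $x\in B_\beta$ with $\beta<\alpha$ is either a subset of some $\beta_n$ or the complement of one. Lemma~\ref{lem:relative}, applied to $B_\alpha$ with the sequence $(e_n)$, then shows that $B_\alpha$ is tightly $\sigma$-filtered over $A$.
\end{enumerate}

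I expect the limit case to be the main point. It is essentially what Lemma~\ref{lem:relative} was designed for, so the only care needed is in verifying the identification of $A$ with $\bigcup_{\beta<\alpha}B_\beta$. Once both cases hold, Proposition~\ref{prop:concatenate} with $\theta=\omega_1$ yields that $\Cc(\omega_1)$ is tightly $\sigma$-filtered.
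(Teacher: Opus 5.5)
Your proof is correct and follows the paper's argument: the same chain $B_\alpha$, the forward direction via Proposition~\ref{prop:relativetight}, Lemma~\ref{lem:relative} at limit stages, and Proposition~\ref{prop:concatenate} with $\theta=\omega_1$. For that last step, the fact you need is $\bigcup_{\alpha<\omega_1}B_\alpha=\Cc(\omega_1)$, which holds because countable subsets of $\omega_1$ are bounded; the identity $B_{\omega_1}=\Cc(\omega_1)$ you cite is not what is used. The only difference is minor: at successor stages you check directly that $B_{\beta+1}$ is a one-generator posex of $B_\beta$ with principal cuts, whereas the paper treats successors uniformly via Lemma~\ref{lem:relative} with the constant sequence $e_n=\beta$.
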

\begin{proof}
The forward implication follows from Proposition~\ref{prop:relativetight}, since $\below{\Cc(\omega_1)}\omega=\Pow(\omega)$.

Conversely, suppose $\Pow(\omega)$ is tightly $\sigma$-filtered. For each $\alpha<\omega_1$, put
\[
B_\alpha=\{x\in\Cc(\omega_1):x\subseteq\alpha\text{ or }\omega_1\setminus x\subseteq\alpha\}.
\]
These are increasing subalgebras, $B_0=\{\varnothing,\omega_1\}$, and their union is $\Cc(\omega_1)$ because every countable subset of $\omega_1$ is bounded. The map $x\mapsto x\cap(\alpha+1)$ is an isomorphism from $B_\alpha$ onto $\Pow(\alpha+1)$. Thus $B_\alpha$ is either finite or isomorphic to $\Pow(\omega)$, and in either case it is tightly $\sigma$-filtered.

Fix $0<\alpha<\omega_1$. Choose a nondecreasing sequence of ordinals $(e_n)_{n<\omega}$ below $\alpha$ such that every $\beta<\alpha$ satisfies $\beta\leq e_n$ for some $n$. When $\alpha=\gamma+1$, take $e_n=\gamma$ for all $n$; when $\alpha$ is a limit, take a cofinal sequence. Viewed as elements of $B_\alpha$, these satisfy
\[
\below{B_\alpha}{e_n}=\Pow(e_n),\qquad
\gen{\below{B_\alpha}{e_n}}^{B_\alpha}=B_{e_n}.
\]
Hence
\[
\gen{\bigcup_{n<\omega}\below{B_\alpha}{e_n}}^{B_\alpha}
=\bigcup_{n<\omega}B_{e_n}
=\bigcup_{\beta<\alpha}B_\beta.
\]
Lemma~\ref{lem:relative} shows that $B_\alpha$ is tightly $\sigma$-filtered over this union. Proposition~\ref{prop:concatenate} now applies. Notice that the original chain $(B_\alpha)$ need not be continuous; the intervening relative filtrations supply the missing stages.
\end{proof}

\begin{corollary}\label{cor:Cohen}
In the $\aleph_2$-Cohen model, $\Cc(\omega_1)$ is tightly $\sigma$-filtered.
\end{corollary}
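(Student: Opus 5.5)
The corollary will follow by combining Theorem~\ref{thm:equivalence} with the known fact that the axiom~\eqref{eq:TSF} holds in the $\aleph_2$-Cohen model. The argument therefore reduces to citing the literature for the statement that $\Pow(\omega)$ is tightly $\sigma$-filtered there, and then applying the backward implication of Theorem~\ref{thm:equivalence}.

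For the first step, work in the model obtained by adding $\aleph_2$ Cohen reals to a model of CH. By the results of Koppelberg, Geschke and Stepr\=ans recalled in the introduction (\cite{Koppelberg,Geschke,Steprans}), $\Pow(\omega)$ has the weak Freese--Nation property in this model, and $\cont=\aleph_2$. If one wants the verification to pass through the equivalence already quoted, \cite[Corollary 5.5]{Geschke} says that WFN of $\Pow(\omega)$ together with $\cont\leq\aleph_2$ is equivalent to~\eqref{eq:TSF}. Alternatively, one can exhibit the filtration directly. Let $V[G_\alpha]$ denote the intermediate models obtained by adding the first $\alpha$ Cohen reals. The algebras $\Pow(\omega)\cap V[G_\alpha]$ are $\sigma$-subalgebras of the final $\Pow(\omega)$, since Cohen forcing yields countably generated cuts. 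Combining these with CH-style filtrations of each $\Pow(\omega)\cap V[G_{\alpha+1}]$ over the previous one, and using Proposition~\ref{prop:concatenate}, gives~\eqref{eq:TSF}. I would simply cite the literature rather than redo this argument.

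For the second step, Theorem~\ref{thm:equivalence} applies verbatim and yields that $\Cc(\omega_1)$ is tightly $\sigma$-filtered. The corollary itself carries no real obstacle. The only point needing care is making sure that the cited result is precisely tight $\sigma$-filteredness of $\Pow(\omega)$, not just of $\Pow(\omega)/\fin$ or WFN alone. The paper handles this via Geschke's equivalence, with the cardinal bound $\cont=\aleph_2$ holding in the model.
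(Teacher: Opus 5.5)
Your proposal is correct and follows the paper's argument: cite \cite{Koppelberg,Geschke,Steprans} for tight $\sigma$-filteredness of $\Pow(\omega)$ in the $\aleph_2$-Cohen model, then apply the backward direction of Theorem~\ref{thm:equivalence}. Your optional direct sketch is not needed. If you did pursue it, note that the chain $\Pow(\omega)\cap V[G_\alpha]$ is not continuous at limits of countable cofinality, so Proposition~\ref{prop:concatenate} would also require relative filtrations at those limit stages, not only at successors.
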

\begin{proof}
In this model $\Pow(\omega)$ is tightly $\sigma$-filtered~\cite{Koppelberg,Geschke,Steprans}. Apply Theorem~\ref{thm:equivalence}.
\end{proof}

\begin{corollary}\label{cor:MA}
Under $\MA+\neg\mathrm{CH}$, $\Cc(\omega_1)$ is not tightly $\sigma$-filtered. In particular, there are models with $\cont=\aleph_2$ in which it is not tightly $\sigma$-filtered.
\end{corollary}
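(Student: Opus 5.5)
By Theorem~\ref{thm:equivalence}, it suffices to show that $\MA+\neg\mathrm{CH}$ implies that $\Pow(\omega)$ is not tightly $\sigma$-filtered. The second sentence then follows by taking a model of $\MA+\cont=\aleph_2$, which exists by the standard Solovay--Tennenbaum ccc iteration over a model of CH (or GCH).

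To show that $\Pow(\omega)$ fails~\eqref{eq:TSF} under $\MA+\neg\mathrm{CH}$, I would argue by contradiction and use Proposition~\ref{prop:identification}. If $\Pow(\omega)$ were tightly $\sigma$-filtered, then $\cont\le\aleph_2$ and $\Pow(\omega)$ would have the WFN. The cleanest route is the known theorem of Fuchino--Koppelberg--Shelah and Geschke that the WFN of $\Pow(\omega)$ implies $\mathfrak b=\aleph_1$ (more precisely, that many cardinal invariants, among them $\mathfrak b$, $\mathfrak p$ and $\mathrm{cov}(\mathcal M)$, stay at $\aleph_1$ and the model looks like a Cohen model); see~\cite{FKS,Geschke}. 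Since $\MA+\neg\mathrm{CH}$ gives $\mathfrak p=\mathfrak b=\cont>\aleph_1$, this is a contradiction.

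If I want a more self-contained argument, I would instead exhibit a direct obstruction to the posex extension property via Proposition~\ref{prop:identification}. Under the assumption, $\Pow(\omega)/\fin\cong\BB$, and $\BB$ carries an additive $\sigma$-skeleton. I would then use MA to build an $(\omega_1,\omega_1)$-gap or an $\omega_1$-tower whose behaviour contradicts the skeleton. The candidate contradiction is this: in a $\sigma$-subalgebra of size $\aleph_1$, every cut is countably generated from below. MA, however, fills every $(\omega_1,\omega_1)$-pregap that is not a Hausdorff gap, and $\mathfrak p>\omega_1$ means that every $\omega_1$-tower has a pseudo-intersection. I would take an element $b$ and a skeleton member $S$ of size $\aleph_1$, and consider the increasing $\omega_1$-chain $\below S b$. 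If that ideal is countably generated, then closing $S$ under more elements lets me produce an increasing $\omega_1$-sequence that must be cofinal in an ideal below some $b$. That conflicts with the requirement that $\below S b$ be countably generated for every $b$.

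The main obstacle is making this direct argument precise, so I would rely on the cited result that WFN of $\Pow(\omega)$ forces $\mathfrak b=\aleph_1$, or equivalently that it fails under $\MA_{\aleph_1}$, and present the argument as this citation chain combined with Theorem~\ref{thm:equivalence}.
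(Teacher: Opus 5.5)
Your proof is correct, but it uses a different obstruction from the paper's.

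\textbf{Common part.} Both arguments use Theorem~\ref{thm:equivalence} to reduce to showing that $\Pow(\omega)$ is not tightly $\sigma$-filtered. Both then draw a set-theoretic consequence from Proposition~\ref{prop:identification}.

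\textbf{The paper's route.} It uses the stronger conclusion that $\Pow(\omega)/\fin$ is itself tightly $\sigma$-filtered. Such an algebra is $(\aleph_1,\aleph_0)$-ideal and therefore has no strictly increasing chain of type $\omega_2$~\cite[Proposition 2.5]{DH}. The paper then builds such a chain explicitly from $\MA$, using pseudointersections of decreasing sequences of length $<\cont$ together with $\cont\ge\aleph_2$.

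\textbf{Your route.} You use only the WFN half of Proposition~\ref{prop:identification}. You then quote the Fuchino--Koppelberg--Shelah result~\cite{FKS} that $\mathfrak b\ge\aleph_2$ rules out the WFN of $\Pow(\omega)$. Since $\MA+\neg\mathrm{CH}$ gives $\mathfrak b=\cont\ge\aleph_2$, this finishes the argument.

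\textbf{Comparison.} Your version is a one-line citation, and it makes visible that $\mathfrak b>\aleph_1$ alone suffices. The paper's version is more self-contained: the chain lemma is a short Fodor argument on the WFN function, and the $\MA$ step is spelled out. The two are really the same idea. If $\mathfrak b\ge\aleph_2$, take a strictly $\le^*$-increasing sequence $(f_\alpha)_{\alpha<\omega_2}$ in $\omega^\omega$. The sets $\{(n,m):m\le f_\alpha(n)\}$ then form a strictly increasing $\omega_2$-chain in $\Pow(\omega\times\omega)/\fin$, which is exactly the paper's obstruction.

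\textbf{Two corrections to side remarks.}
\begin{enumerate}
\item You say that the WFN of $\Pow(\omega)$ keeps $\mathrm{cov}(\mathcal M)$ at $\aleph_1$. This is false. In the Cohen model $\mathrm{cov}(\mathcal M)=\cont$. Moreover, the proof of Proposition~\ref{prop:identification} uses precisely that the WFN of $\Pow(\omega)$ with $\cont<\aleph_\omega$ gives $\MA(\mathrm{countable})$, i.e.\ $\mathrm{cov}(\mathcal M)=\cont$. This does not damage your argument, which needs only $\mathfrak b$ (or $\mathfrak p$).
\item Your ``self-contained'' paragraph on gaps and towers does not yet amount to an argument. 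For instance, $\below S b$ is an ideal, not an increasing $\omega_1$-chain, and no contradiction with the skeleton is actually derived. The clean self-contained version of what you were looking for is the $\omega_2$-chain obstruction above. You were right to fall back on the citation instead.
\end{enumerate}
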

\begin{proof}
By Theorem~\ref{thm:equivalence}, it suffices to rule out tight $\sigma$-filteredness of $\Pow(\omega)$. If it held, Proposition~\ref{prop:identification} would make $\Pow(\omega)/\fin$ tightly $\sigma$-filtered. Such an algebra is $(\aleph_1,\aleph_0)$-ideal and hence has no strictly increasing chain of order type $\omega_2$~\cite[Proposition 2.5]{DH}. On the other hand, $\MA+\neg\mathrm{CH}$ gives such a chain in $\Pow(\omega)/\fin$: at each stage below $\omega_2$, Martin's axiom supplies an infinite pseudointersection of the fewer than $\cont$ earlier members of a decreasing chain; splitting it gives a strictly smaller member modulo finite sets. Taking complements produces an increasing chain. This is the obstruction also used in~\cite[Theorem 5.10]{DH}. Finally, $\MA+\cont=\aleph_2$ is consistent relative to ZFC.
\end{proof}

\begin{proof}[Proof of Theorem~\ref{thm:A}]
Assume~\eqref{eq:TSF}. Proposition~\ref{prop:identification} gives regularity of $\cont$ and $\BB\cong\Pow(\omega)/\fin$. Moreover,
\[
|\Cc(\omega_1)|=|[\omega_1]^{\leq\aleph_0}|=\cont:
\]
the lower bound follows from $\Pow(\omega)\subseteq\Cc(\omega_1)$, and the upper bound from $\omega_1^{\aleph_0}\leq\cont^{\aleph_0}=\cont$. By Theorem~\ref{thm:equivalence} and Lemma~\ref{lem:CcCSP}, $\Cc(\omega_1)$ is tightly $\sigma$-filtered and satisfies the CSP. Lemma~\ref{lem:retract} makes it a Boolean retract of $\BB$, hence of $\Pow(\omega)/\fin$.

Stone duality gives a continuous retraction of $\omega^*$ onto a copy of $\Kcc$. Since $C(\Kcc)$ fails the BFPP and the BFPP passes to spaces of continuous functions on retracts~\cite{BFPP}, $C(\omega^*)$ fails the BFPP. The Cohen-model conclusion follows because~\eqref{eq:TSF} holds there.
\end{proof}

\section{A Boolean analogue of a topological lemma}\label{sec:quotients}
The retractions used above suggest analogous questions for quotients of Boolean algebras. We begin with the classical topological statement used in the proof of Theorem~\ref{thm:CH}; see~\cite[Theorem 1.8.1]{vanMill}. We make the CH assumption explicit in this formulation.

\begin{lemma}[CH]\label{lem:Pset}
Every nonempty closed $P$-set of $\omega^*$ is a retract of $\omega^*$.
\end{lemma}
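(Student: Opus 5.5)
Under CH, $\Pow(\omega)/\fin$ has size $\aleph_1$, and by Stone duality the nonempty closed $P$-set $F\subseteq\omega^*$ corresponds to a proper $P$-ideal $I$ of $\Pow(\omega)/\fin$, with $F$ the Stone space of the quotient $Q=(\Pow(\omega)/\fin)/I$. A continuous retraction $\omega^*\to F$ is then the same thing as a homomorphism $s\colon Q\to\Pow(\omega)/\fin$ with $q\circ s=\id_Q$, where $q$ is the quotient map; composing $q$ with the dual of the embedding gives the retraction. So the plan is to construct such a homomorphic section $s$.

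First, $Q$ has size at most $\aleph_1$, so we enumerate $Q=\{x_\alpha:\alpha<\omega_1\}$ and let $Q_\alpha=\gen{\{x_\beta:\beta<\alpha\}}$, a countable subalgebra. We build an increasing chain of homomorphisms $s_\alpha\colon Q_\alpha\to\Pow(\omega)/\fin$ with $q\circ s_\alpha=\id_{Q_\alpha}$, taking unions at limit stages. At a successor stage we must send the new generator $x=x_\alpha$ to an element $c$ with $q(c)=x$. By the one-generator Sikorski criterion, $c$ also has to satisfy $s_\alpha(a)\leq c$ for every $a\in\below{Q_\alpha}x$ and $c\leq s_\alpha(b)$ for every $b\in Q_\alpha$ with $b\geq x$. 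Both families are countable, because $Q_\alpha$ is countable.

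The key step, and the main obstacle, is finding $c$ that separates these two countable families and also lies in the right $I$-coset. Pick any $d$ with $q(d)=x$. For $a\leq x$ we have $s_\alpha(a)\setminus d\in I$, and for $b\geq x$ we have $d\setminus s_\alpha(b)\in I$; these are countably many elements of $I$. Since $I$ is a $P$-ideal, there is a single $i\in I$ dominating all of them. Set $d'=(d\vee\bigvee_{\text{finite}}\ldots)$; more concretely, first replace $d$ by $d\setminus i$. This gives a $d_1\le s_\alpha(b)$ for all $b$, and $s_\alpha(a)\setminus d_1\le i$.

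It then remains to enlarge $d_1$ inside the interval below all $s_\alpha(b)$ so that it lies above all $s_\alpha(a)$. Each $s_\alpha(a)$ lies below each $s_\alpha(b)$, because $s_\alpha$ is a homomorphism and $a\le b$. By the countable separation property of $\Pow(\omega)/\fin$ (Hausdorff: no $(\omega,\omega)$-gaps), there is $e$ with $s_\alpha(a)\leq e\leq s_\alpha(b)$ for all $a,b$. Put $c=d_1\vee e$. Then $c\le s_\alpha(b)$ and $c\ge s_\alpha(a)$. We also need $q(c)=x$, that is, $e\setminus d_1\in I$. Here we use $e\setminus d\le$ some element of $I$, and this needs care: choose $e$ inside the countable separation relative to $d$. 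Concretely, separate the families $\{s_\alpha(a)\}$ and $\{s_\alpha(b)\wedge(d\vee i)\}$ instead. This is still a legitimate pregap, since $s_\alpha(a)\le d\vee i$, and any separator then lies below $d\vee i$, hence in the coset of $x$ up to $I$. Checking these coset computations is the delicate part.

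Finally, $s=\bigcup_\alpha s_\alpha$ is the required section, and dualizing gives the retraction of $\omega^*$ onto $F$.
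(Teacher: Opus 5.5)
This lemma has no proof in the paper (it is cited from van Mill). Your argument is correct, and it is exactly the proof of the paper's Lemma~\ref{lem:lifting}, specialized to $B=\Pow(\omega)/\fin$ under CH, where $|Q|\le\aleph_1$ makes the chain of countable subalgebras $Q_\alpha$ serve as the tight $\sigma$-filtration. Your extension step differs only cosmetically: $c=(d\wedge -i)\vee e$, with $e$ separating $\{s_\alpha(a)\}$ from $\{s_\alpha(b)\wedge(d\vee i)\}$, works because $q(e)\le q(d\vee i)=x$ (so ``in the coset of $x$'' should read $q(e)\le x$), and it uses one $P$-ideal bound and one CSP application instead of the paper's two symmetric corrections $u,v$.
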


Here a closed set $X\subseteq K$ is a \emph{$P$-set} if every countable intersection of neighborhoods of $X$ is again a neighborhood of $X$. An ideal $I$ of a Boolean algebra is a \emph{$P$-ideal} if every countable subset of $I$ has an upper bound in $I$, with respect to the Boolean order. We call $I$ \emph{$\aleph_1$-generated} if it has a generating family of cardinality at most $\aleph_1$. All quotient ideals considered in this section are proper.

Under Stone duality, a closed subset $X$ of the Stone space of $B$ corresponds to the quotient by the ideal of clopen sets disjoint from $X$. The $P$-set condition is equivalent to this ideal being a $P$-ideal. This motivates the following question.

\begin{question}\label{q:quotient}
Let $B$ be a tightly $\sigma$-filtered Boolean algebra with the CSP, and let $I$ be a proper $P$-ideal of $B$. Is $B/I$ a Boolean retract of $B$?
\end{question}

The following lifting result is a special case of the theorem of Koppelberg~\cite{Koppelberg} recorded and generalized in~\cite[Theorem 1.9]{Geschke}. We include a proof in the notation of this paper.

\begin{lemma}[Koppelberg; see Geschke~\cite{Geschke}]\label{lem:lifting}
Let $B$ satisfy the CSP and let $I$ be a proper $P$-ideal. If $B/I$ is tightly $\sigma$-filtered, the quotient map $\pi\colon B\to B/I$ has a Boolean homomorphic right inverse. In particular, $B/I$ is a Boolean retract of $B$.
\end{lemma}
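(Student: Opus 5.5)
We construct the right inverse $s\colon B/I\to B$ stage by stage along a tight $\sigma$-filtration $(D_\xi)_{\xi\leq\lambda}$ of $B/I$, with $D_0=\{0,1\}$. We build coherent homomorphisms $s_\xi\colon D_\xi\to B$ satisfying $\pi\circ s_\xi=\id_{D_\xi}$. We start with the trivial $s_0$ and take unions at limit stages; by continuity of the chain, the union is again a homomorphism and a right inverse. Everything then reduces to the successor step. Once $s=s_\lambda$ is built, $\pi\circ s=\id_{B/I}$, so $e=s$ and $r=\pi$ exhibit $B/I$ as a Boolean retract of $B$.

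For the successor step, write $D_{\xi+1}=\gen{D_\xi\cup\{t_1,t_2,\ldots\}}$. As in the proof of Proposition~\ref{prop:extend}, by Proposition~\ref{prop:posex}(d) it suffices to handle a single new generator $t$ over a subalgebra $E$ such that $\gen{E\cup\{t\}}$ is a posex of $E$. We are given a homomorphism $h\colon E\to B$ with $\pi\circ h=\id_E$. Choose a countable cofinal family $(a_k)$ in $\below E t$ and a countable coinitial family $(b_j)$ in $\{d\in E:d\geq t\}$. It is harmless to assume $(a_k)$ increasing and $(b_j)$ decreasing, since $E$ is closed under finite joins and meets. We need $c\in B$ with $\pi(c)=t$ and $h(a_k)\leq c\leq h(b_j)$ for all $k,j$. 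By Sikorski's one-generator criterion, sending $t\mapsto c$ then extends $h$, and $\pi\circ\bar h$ agrees with the identity on the generators, so it is the identity on $\gen{E\cup\{t\}}$.

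The main obstacle is finding such a $c$, because the CSP alone ignores the requirement $\pi(c)=t$. First pick any $c_0\in B$ with $\pi(c_0)=t$. For each $k$, $\pi(h(a_k)\wedge -c_0)=a_k\wedge -t=0$, so $h(a_k)\wedge -c_0\in I$. Similarly $c_0\wedge -h(b_j)\in I$. Since $I$ is a $P$-ideal, there is a single $u\in I$ bounding all these countably many elements. Put $c_1=c_0\vee\bigvee$ of nothing; instead, set $x_k=h(a_k)\wedge -u$ and $y_j=h(b_j)\vee u$. Then $x_k\leq c_0\vee u$ and $c_0\wedge -u\leq y_j$, and $x_k\leq h(b_j)\leq y_j$.

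We then use the CSP to separate the countable family $\{h(a_k)\}\cup\{c_0\wedge -u\}$ from $\{h(b_j)\}\cup\{c_0\vee u\}$. We must check that every lower element is below every upper one. For $h(a_k)\leq h(b_j)$, this holds because $a_k\leq b_j$. For $h(a_k)\leq c_0\vee u$, this holds by the choice of $u$, and likewise for $c_0\wedge -u\leq h(b_j)$. Finally $c_0\wedge -u\leq c_0\vee u$ trivially. The CSP then gives $c$ with
\[
c_0\wedge -u\leq c\leq c_0\vee u,\qquad h(a_k)\leq c\leq h(b_j).
\]
The first pair of inequalities yields $c\,\triangle\, c_0\leq u\in I$, so $\pi(c)=t$. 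The second pair gives the Sikorski condition: if $d\in E$ and $d\leq t$, then $d\leq a_k$ for some $k$, so $h(d)\leq c$; dually if $d\geq t$. This completes the successor step and hence the construction.
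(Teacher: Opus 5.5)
Your proof is correct and follows the paper's route. You take a tight $\sigma$-filtration of $B/I$, reduce each posex step to one-generator Sikorski extensions, and at each step correct an arbitrary lift $c_0$ using the $P$-ideal property to bound its errors and the CSP to fix it; your single application of each ingredient, squeezing $c$ between $\{h(a_k)\}\cup\{c_0\wedge -u\}$ and $\{h(b_j)\}\cup\{c_0\vee u\}$, is simply a more compact form of the paper's $b=(b_0\wedge -v)\vee u$, which uses two separately constructed elements $u,v\in I$ (and the stray sentence introducing $c_1$, $x_k$, $y_j$ is unused and should be deleted).
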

\begin{proof}
Put $A=B/I$. We first verify the extension step. Suppose $C\subseteq D\subseteq A$, $D$ is a posex of $C$, and $s\colon C\to B$ satisfies $\pi\circ s=\id_C$. We claim that $s$ extends to $\widetilde s\colon D\to B$ with $\pi\circ\widetilde s=\id_D$.

By enumerating countably many generators for $D$ over $C$ and using Proposition~\ref{prop:posex}(d), it suffices to treat $D=\gen{C\cup\{d\}}$. The ideals $\below C d$ and $\below C{-d}$ are generated by sequences $(\ell_n)_n$ and $(m_n)_n$. Choose $b_0\in B$ with $\pi(b_0)=d$ and put
\[
e_n=s(\ell_n)\wedge -b_0\in I,
\qquad
f_n=s(m_n)\wedge b_0\in I.
\]
Since $\ell_n\wedge m_k=0$, we have $s(\ell_n)\wedge s(m_k)=0$ for all $n,k$. In particular, $e_n\wedge s(m_k)=0$ and $f_n\wedge s(\ell_k)=0$.

Because $I$ is a $P$-ideal, choose $i\in I$ above all $e_n$. By the CSP, choose $c\in B$ with $e_n\leq c\leq-s(m_k)$ for all $n,k$. Then $u=i\wedge c\in I$ satisfies $e_n\leq u$ and $u\wedge s(m_k)=0$. Similarly, choose $v\in I$ with $f_n\leq v$ and $v\wedge s(\ell_k)=0$ for all $n,k$. Set
\[
b=(b_0\wedge-v)\vee u.
\]
Since $b\mathbin{\triangle}b_0\leq u\vee v\in I$, one has $\pi(b)=d$. Also, for every $n$,
\begin{align*}
s(\ell_n)&=(s(\ell_n)\wedge b_0\wedge-v)\vee e_n\leq b,\\
s(m_n)\wedge b&=(f_n\wedge-v)\vee(s(m_n)\wedge u)=0.
\end{align*}
The one-generator extension theorem therefore gives $\widetilde s\colon D\to B$ extending $s$ with $\widetilde s(d)=b$. The maps $\pi\circ\widetilde s$ and $\id_D$ agree on the generators $C\cup\{d\}$, so they agree on $D$. This proves the extension step.

Now take a tight $\sigma$-filtration $(A_\alpha)_{\alpha\leq\lambda}$ of $A$. Starting with the unique unital homomorphism $s_0\colon\{0,1\}\to B$, apply the extension step at successors and take unions at limits. The resulting homomorphism $s\colon A\to B$ satisfies $\pi\circ s=\id_A$. Thus $s$ is an embedding and $s\circ\pi$ is a retraction of $B$ onto $s[A]$.
\end{proof}

The remaining argument is a preservation result for quotients. It uses neither the CSP nor the $P$-ideal hypothesis. The cut calculation in the proof is also the one used by Dow and Hart~\cite[Lemma 5.11]{DH}.

\begin{proposition}\label{prop:quotient}
If $B$ is tightly $\sigma$-filtered and $I$ is a proper $\aleph_1$-generated ideal of $B$, then $B/I$ is tightly $\sigma$-filtered.
\end{proposition}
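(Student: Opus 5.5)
The plan is to reduce to the countably generated case using Lemma~\ref{lem:relative}, and then to handle $\aleph_1$ generators by a chain of length $\omega_1$ whose successive stages are quotients by countably generated ideals.

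\emph{Countably generated case.} Suppose $I$ is generated by $e_0\leq e_1\leq\cdots$. By Lemma~\ref{lem:relative}, $B$ is tightly $\sigma$-filtered over
\[
A=\gen{\textstyle\bigcup_n\below B{e_n}}=I\cup\{-x:x\in I\}.
\]
Let $\pi\colon B\to B/I$ be the quotient map and $(C_\xi)$ the filtration from $A$ to $B$. Since $\pi[A]=\{0,1\}$, I would push the filtration forward as $D_\xi=\pi[C_\xi]$. This chain is continuous, and countable generation of successor steps is preserved by images. It remains to show $D_\xi\leq_\sigma B/I$. The relevant cut identity is
\[
\pi(c)\leq\pi(b)\iff c\leq b\vee e_n\text{ for some }n.
\]
By Lemma~\ref{lem:stages} and the proof of Lemma~\ref{lem:relative}, each $C_\xi\leq_\sigma B$. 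So for each $n$ we may pick a countable cofinal $G_n\subseteq\below{C_\xi}{b\vee e_n}$. Then $\pi\bigl[\bigcup_n G_n\bigr]$ is cofinal in $\below{D_\xi}{\pi(b)}$, and Proposition~\ref{prop:posex}(a) gives the posex steps.

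\emph{General case.} Write $I=\bigcup_{\alpha<\omega_1}I_\alpha$ as a continuous increasing union of countably generated ideals, and take an additive $\sigma$-skeleton $\FF$ of $B$. I would build a continuous chain $(S_\xi)$ in $\FF$ whose early stages absorb the generators of $I$, interleaved with an enumeration of $B$ as in the proof of Lemma~\ref{lem:relative}. I would then set $D_\xi=\pi[\gen{I\cup S_\xi}]$. As before, continuity and countable generation of successor steps are automatic. The content is again showing each $D_\xi\leq_\sigma B/I$, which by Proposition~\ref{prop:posex}(a) makes $(D_\xi)$ a tight $\sigma$-filtration of $B/I$.

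\emph{Main obstacle.} The hard step is this cut calculation. We have
\[
\below{D_\xi}{\pi(b)}=\pi\bigl[\{c\in\gen{I\cup S_\xi}: c\leq b\vee i\text{ for some }i\in I\}\bigr],
\]
and taking the union over the $\aleph_1$ generators of $I$ naively gives only an $\aleph_1$-generated cut, not a countably generated one. I would first try to exploit that $I\subseteq\gen{I\cup S_\xi}$ absorbs most of this. Modulo $I$, an element $c\wedge-b$ lies in $I$ exactly when a countable witness already appears below some $i\in I_\alpha$ for one $\alpha<\omega_1$. The aim is to show the cut is generated by $\pi[G]$ for a countable cofinal $G\subseteq\below{S_\xi}{b\vee i_\alpha}$ with a single suitable $\alpha$ depending on $b$ and $S_\xi$. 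This should follow from countability of the relevant traces of $S_\xi$ on $I$, in the manner of the Dow--Hart cut computation~\cite[Lemma 5.11]{DH}.

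If that fails, I would instead iterate the countably generated case along $\alpha<\omega_1$. Concretely, I would filter $B/I$ by the images of filtrations of $B/I_\alpha$ and concatenate them with Proposition~\ref{prop:concatenate}, in the same way Theorem~\ref{thm:equivalence} handled the $\omega_1$-chain $(B_\alpha)$.
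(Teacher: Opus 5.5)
Your countably generated case is correct. It does not even need Lemma~\ref{lem:relative}: any tight $\sigma$-filtration of $B$ pushes forward, by Lemma~\ref{lem:stages} and your cut identity. The gap is in the general case. The cut calculation you call the main obstacle is the entire content of the proposition, and you leave it as a hope.

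The missing idea is to put only a generating family $G$ of $I$, with $|G|\le\aleph_1$, inside the $\sigma$-subalgebra, rather than adjoining all of $I$. Suppose $C\le_\sigma B$ and $G\subseteq C$. Then $I\cap C$ is cofinal in $I$, since every element of $I$ lies below a finite join of generators. So for $c\in C$ with $\pi(c)\le\pi(b)$, pick $i\in I\cap C$ with $c\wedge-b\le i$. Then $c\wedge-i\in\below Cb$ and $\pi(c\wedge-i)=\pi(c)$. Hence
\[
\below{\pi[C]}{\pi(b)}=\pi\bigl[\below Cb\bigr],
\]
which is countably generated because $C\le_\sigma B$. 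No union over generators and no choice of $\alpha$ depending on $b$ is involved.

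What makes this work is cofinality of $I\cap C$ in $I$, not countability of a trace. When $I$ is not countably generated, $I\cap C$ is necessarily uncountable. Subtracting $i$ inside $\gen{I\cup S_\xi}$ instead does not help: that algebra is in general not a $\sigma$-subalgebra of $B$, even though its image is just $\pi[S_\xi]$. Already $\gen I$ fails this whenever $I$ is not countably generated and $B/I\neq\{0,1\}$.

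The stages before $G$ is absorbed need a separate, easy argument. One option is to add the generators along the first $\omega_1$ stages while keeping those stages countable; then those $\pi[S_\xi]$ are countable and trivially $\sigma$-subalgebras of $B/I$. The other is what the paper does. Take $H\in\FF$ with $G\subseteq H$ and $|H|\le\aleph_1$. Push forward a chain in $\FF$ starting at $H$ to get $B/I$ tightly $\sigma$-filtered over $\pi[H]$. Then prepend a filtration of $\pi[H]$, which has size at most $\aleph_1$, by countable subalgebras, using Proposition~\ref{prop:concatenate}.

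Your fallback does not work as described. The algebras $B/I_\alpha$ are not subalgebras of $B/I$ but surject onto it, so they do not form a chain to which Proposition~\ref{prop:concatenate} applies. Pushing a filtration of $B/I_\alpha$ forward to $B/I\cong(B/I_\alpha)/(I/I_\alpha)$ is just the original problem again, since $I/I_\alpha$ is still $\aleph_1$-generated and not countably generated.
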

\begin{proof}
Let $G\subseteq I$ generate $I$, with $|G|\leq\aleph_1$, and let $\pi\colon B\to B/I$ be the quotient map. Choose an additive $\sigma$-skeleton $\FF$ of $B$. By Definition~\ref{def:skeleton}(c), there is $H\in\FF$ containing $G$ with $|H|\leq\aleph_1$; for finite $B$ simply take $H=B$.

We claim that, whenever $C\in\FF$ and $H\subseteq C$, one has $\pi[C]\leq_\sigma B/I$. In fact, for every $b\in B$,
\[
\below{\pi[C]}{\pi(b)}=\pi[\below C b].
\]
The inclusion from right to left is immediate. Conversely, if $c\in C$ and $\pi(c)\leq\pi(b)$, then $c\wedge-b\in I$. Since $G\subseteq C$ generates $I$, choose $i\in I\cap C$ with $c\wedge-b\leq i$. The element $c'=c\wedge-i$ belongs to $\below C b$ and satisfies $\pi(c')=\pi(c)$. Since $C\leq_\sigma B$, the ideal on the right has a countable cofinal set, proving the claim.

Enumerate $B=\{b_\xi:\xi<\kappa\}$. Construct a continuous increasing chain $(C_\xi)_{\xi\leq\kappa}$ in $\FF$ with $C_0=H$, $b_\xi\in C_{\xi+1}$ and $C_{\xi+1}$ countably generated over $C_\xi$. At successors use Proposition~\ref{prop:posex}(h), and at limits use Definition~\ref{def:skeleton}(b). Then $C_\kappa=B$.

By the claim, $\pi[C_\xi]\leq_\sigma B/I$. Also, $\pi[C_{\xi+1}]$ is countably generated over $\pi[C_\xi]$, so each successor step of the continuous chain $(\pi[C_\xi])_{\xi\leq\kappa}$ is a posex. Thus $B/I$ is tightly $\sigma$-filtered over $\pi[H]$.

Finally, every Boolean algebra of size at most $\aleph_1$ is tightly $\sigma$-filtered: write it as a continuous chain of countable subalgebras and use Proposition~\ref{prop:posex}(c). Apply this to $\pi[H]$, then use Proposition~\ref{prop:concatenate}.
\end{proof}

\begin{proof}[Proof of Theorem~\ref{thm:B}]
Proposition~\ref{prop:quotient} makes $B/I$ tightly $\sigma$-filtered. The CSP of $B$ and the $P$-ideal property of $I$ then allow us to apply Lemma~\ref{lem:lifting}.
\end{proof}

\begin{remark}
Under the hypotheses of Question~\ref{q:quotient}, $B/I$ is a retract of $B$ if and only if $B/I$ is tightly $\sigma$-filtered. One implication follows from Lemma~\ref{lem:lifting}; the other follows because retracts of tightly $\sigma$-filtered algebras are tightly $\sigma$-filtered~\cite[Corollary 2.8]{Geschke}. Thus Question~\ref{q:quotient} asks whether the conclusion of Proposition~\ref{prop:quotient} remains valid for arbitrary $P$-ideals when $B$ has the CSP. In the positive case, Lemma~\ref{lem:lifting} gives the stronger conclusion that the given quotient map splits.
\end{remark}

\section{Open questions}\label{sec:questions}
The following two questions concerning the BFPP remain.
\begin{enumerate}[label=\textup{(\arabic*)}]
\item Is there a model of set theory in which $C(\omega^*)$ has the ball fixed point property?
\item Is there a model in which $\Kcc$ is not a continuous retract of $\omega^*$? If so, does $C(\omega^*)$ nevertheless fail the BFPP in that model?
\end{enumerate}
Theorem~\ref{thm:A} shows that any such model must fail~\eqref{eq:TSF}. Corollary~\ref{cor:MA} supplies models of this failure with $\cont=\aleph_2$, namely models of $\MA+\cont=\aleph_2$. The failure of tight $\sigma$-filteredness alone does not answer either question. The algebraic lifting problem in Question~\ref{q:quotient} also remains open beyond the case covered by Theorem~\ref{thm:B}.

\section{Declaration of generative AI use}
The main mathematical results were obtained by the authors through conventional mathematical research, without the use of generative AI. During the preparation and revision of this manuscript, the authors used Anthropic's Claude Opus 5.5 and OpenAI's ChatGPT for assistance with the organization and exposition of arguments, literature checks, drafting and editorial revision. All AI-assisted material retained in the manuscript was independently reviewed and verified by the authors, who assume full responsibility for the accuracy, originality and integrity of the work.


\begin{thebibliography}{99}
\bibitem{Arens} R. Arens, \emph{Representation of $*$-algebras}, Duke Math. J. \textbf{14} (1947), no.~2, 269--282.
\bibitem{AB} A. Avil\'es and C. Brech, \emph{A Boolean algebra and a Banach space obtained by push-out iteration}, Topology Appl. \textbf{158} (2011), 1534--1550.
\bibitem{BFPP} A. Avil\'es, M. Jap\'on, C. Lennard, G. Mart\'inez-Cervantes and A. Stawski, \emph{The ball fixed point property in spaces of continuous functions}, Rev. R. Acad. Cienc. Exactas F\'is. Nat. Ser. A Mat. RACSAM \textbf{120} (2026), article~136. \href{https://doi.org/10.1007/s13398-026-01934-1}{doi:10.1007/s13398-026-01934-1}.
\bibitem{Browder} F. E. Browder, \emph{Nonexpansive nonlinear operators in a Banach space}, Proc. Nat. Acad. Sci. U.S.A. \textbf{54} (1965), 1041--1044.
\bibitem{DH} A. Dow and K. P. Hart, \emph{Applications of another characterization of $\beta\omega\setminus\omega$}, Topology Appl. \textbf{122} (2002), no.~1--2, 105--133.
\bibitem{FGS} S. Fuchino, S. Geschke and L. Soukup, \emph{On the weak Freese--Nation property of $\Pow(\omega)$}, Arch. Math. Logic \textbf{40} (2001), no.~6, 425--435.
\bibitem{FKS} S. Fuchino, S. Koppelberg and S. Shelah, \emph{Partial orderings with the weak Freese--Nation property}, Ann. Pure Appl. Logic \textbf{80} (1996), no.~1, 35--54.
\bibitem{Geschke} S. Geschke, \emph{On tightly $\kappa$-filtered Boolean algebras}, Algebra Universalis \textbf{47} (2002), no.~1, 69--93.
\bibitem{GoebelKirk} K. Goebel and W. A. Kirk, \emph{Classical theory of nonexpansive mappings}, in: Handbook of Metric Fixed Point Theory (W. A. Kirk and B. Sims, eds.), Kluwer Academic Publishers, Dordrecht, 2001, 49--91.
\bibitem{Gohde} D. G\"ohde, \emph{Zum Prinzip der kontraktiven Abbildung}, Math. Nachr. \textbf{30} (1965), 251--258.
\bibitem{Kirk} W. A. Kirk, \emph{A fixed point theorem for mappings which do not increase distances}, Amer. Math. Monthly \textbf{72} (1965), 1004--1006.
\bibitem{Koppelberg} S. Koppelberg, \emph{Applications of $\sigma$-filtered Boolean algebras}, in: Advances in Algebra and Model Theory (M. Droste and R. G\"obel, eds.), Algebra Logic Appl., vol.~9, Gordon and Breach, Amsterdam, 1997, 199--213.
\bibitem{vanMill} J. van Mill, \emph{An introduction to $\beta\omega$}, in: Handbook of Set-Theoretic Topology (K. Kunen and J. E. Vaughan, eds.), Chapter~11, North-Holland, Amsterdam, 1984.
\bibitem{Parovicenko} I. I. Parovi\v cenko, \emph{A universal bicompact of weight $\aleph$}, Soviet Math. Dokl. \textbf{4} (1963), 592--595.
\bibitem{Soardi} P. M. Soardi, \emph{Existence of fixed points of nonexpansive mappings in certain Banach lattices}, Proc. Amer. Math. Soc. \textbf{73} (1979), 25--29.
\bibitem{Steprans} J. Stepr\=ans, \emph{Topological invariants in the Cohen model}, Topology Appl. \textbf{45} (1992), no.~2, 85--101.
\end{thebibliography}
\end{document}